\documentclass[12pt]{amsart}
\usepackage[left=1.5in, right=1.5in, top=1.5in, bottom=1.5in]
{geometry}
\geometry{letterpaper} 
\newtheorem{thm}{Theorem}[section]
\newtheorem{cor}[thm]{Corollary}

\newtheorem{prop}[thm]{Proposition}
\theoremstyle{definition}

\newtheorem{rem}[thm]{Remark}
\usepackage{graphicx}
\usepackage{amssymb}
\usepackage{amsmath}
\usepackage{epstopdf}
\usepackage{hyperref}
\usepackage[colorinlistoftodos,prependcaption,textsize=tiny,
textwidth=0.75in, color=cyan]{todonotes}
\DeclareGraphicsRule{.tif}{png}{.png}{`convert #1 `dirname #1`/
`basename
#1 .tif`.png}
\title[]{Minimizing the free energy}
\author{Emanuel Indrei and Aram Karakhanyan}
\address{Department of Mathematics\\
Purdue University\\
West Lafayette, Indiana \\
USA.}
\address{School of Mathematics\\ The University of Edinburgh\\ Peter
Tait Guthrie
Road\\ EH9 3FD Edinburgh, UK.
}
\thanks{ The first author was partially supported by Eliwise
Academy. The research of the second author was partially supported
by EPSRC grant
EP/S03157X/1 {\em Mean curvature measure of free boundary}.}
\begin{document}
\setcounter{page}{1}
\pagenumbering{arabic}
\maketitle
\begin{abstract}
We prove the sharp quantitative stability in the radial isotropic
Almgren problem. In
addition, we develop a theory for estimating the sharp modulus in
the context of
minimal assumptions on the surface tension and the potential and
obtain the sharp $
\epsilon^2$ in any dimension. Inter-alia, we also solve the problem
of calculating the
critical mass which was only a priori assumed to exist and which
breaks the mass
regime into two sets: the one where the energy is concave and the
one where it is
convex.
\end{abstract}
\section{Introduction}
Two main ingredients define the free energy of a set of finite
perimeter $E \subset
\mathbb{R}^n$ with reduced boundary $\partial^* E$:
$$
\mathcal{F}(E)=\int_{\partial^* E} f(\nu) d\mathcal{H}^{n-1}
$$
(the surface energy);
and,
$$
\mathcal{G}(E)=\int_E g(x)dx
$$
(the potential energy), where $g \ge 0$ is the potential, $g(0)=0$,
$g \in L_{loc}^\infty$, and $f$ is a surface tension, i.e. a convex
positively $1-$
homogeneous
$$f:\mathbb{R}^n\rightarrow [0,\infty).$$ \cite{Cryst}.
In this context, the free energy is
$$
\mathcal{E}(E)=\mathcal{F}(E)+\mathcal{G}(E).
$$
The thermodynamics minimizes the free energy under a mass constraint
($|E|=m$)
and generates a crystal. The underlying phenomenon was independently
discovered
by W. Gibbs in 1878 \cite{G} and P. Curie in 1885 \cite{Crist}.
Assuming the
gravitational effect is negligible, the solution is the convex set
$$
K= \bigcap_{v \in \mathbb{S}^{n-1}} \{x \in \mathbb{R}^n: \langle x,
v\rangle< f(v)\}
$$
(Wulff's theorem \cite{flashes, MR493671, MR1130601}). The longstanding
problem of investigating the convexity of minimizers when
$g$ is convex, known as Almgren's problem, has as its starting point Wulff's theorem. In a recent
paper, Indrei proved the convexity in two dimensions via assumptions
which are near optimal \cite{Cryst} (see in addition \cite{D, pFZ, Crystj}).
The only stability up to now appeared in Figalli's complete Fields Medal
Citation for $g=0$ with
an explicit modulus in the context of \cite{MR2672283}:
$w_m(\epsilon)=c(n)\epsilon^2 m^{\frac{n-1}{n}}$; in
\cite{MR2456887} for
$g=0$ and the isotropic case with a semi-explicit modulus; and, in
\cite{Cryst} for
$m$ small with a semi-explicit modulus and a locally bounded
potential.
A surprising technique enables us to solve the quantitative
stability problem in the
radial isotropic problem (Theorem \ref{@'}). The minimizers are
balls and a sharp
explicit modulus is illuminated: $w_{m,g}(\epsilon)=c(n, m, g)\epsilon^2$, with an
explicit $c(n,m, g)>0$ (cf. Remark \ref{zo}).
In addition, we develop a theory for estimating the sharp modulus in
the context of
minimal assumptions on $g$ and $f$ (Theorem \ref{@7z'}). One
interesting result is
that we obtain the sharp $\epsilon^2$ in any dimension.
Inter-alia we also solve the problem of calculating the critical
mass which breaks the
mass regime into two sets: the one where the energy is concave and
the one where
it is convex (Corollary \ref{@4'}). Last, we apply the results with
a new approach in $\mathbb{R}^3$ via the maximum principle combined with Simon's identity to obtain convexity in
more general environments.
\section{Quantitative stability in Almgren's free energy minimization}
\subsection{A theory for estimating the sharp modulus}
\begin{thm} \label{@'}
Suppose $f(\nu)=1$ if $\nu \in \mathbb{S}^{n-1}$, $g(x)=h(|x|)$, $h:
\mathbb{R}^+
\rightarrow \mathbb{R}^+$ is non-decreasing, $h(0)=0$. Let $m>0$, $|
B_a|=|E|=m$,
then\\
\noindent (i)
$$\mathcal{E}(E)-\mathcal{E}(B_a) \ge \int_{E \setminus B_a} [h(|
x|)-h(a)]dx+a_n
m\inf_{z} \Big(\frac{|E \Delta (B_a+z)|}{m}\Big)^2,
$$
where $a_n>0$ is a dimensional constant. \\
\noindent (ii)
$$
\hskip .2in \mathcal{E}(E)-\mathcal{E}(B_a) \ge \int_{E \setminus
B_a} \int_a^{|x|}
h'(t)dt dx+a_n m\inf_{z} \Big(\frac{|E \Delta (B_a+z)|}{m}\Big)^2,
$$
in addition, assuming $h$ is convex the subsequent inequalities are
valid:\\
\noindent (iii)
$$
\hskip .2in \mathcal{E}(E)-\mathcal{E}(B_a) \ge \partial_+h(a)
\int_{E \setminus B_a}
[|x|-a] dx+a_n m\inf_{z} \Big(\frac{|E \Delta (B_a+z)|}{m}\Big)^2,
$$
where $a_n>0$ and $\partial_+h(a)$ is any slope of a supporting line
at $a$;\\
\noindent (iv)
$$
\hskip .2in \alpha_1[[\partial_+h(a)]^{-1}[\mathcal{E}(E)-
\mathcal{E}(B_a)]]^{1/2} \ge |
E\Delta B_a|,
$$
where $\alpha_1=\alpha_1(n,a)>0$ is explicit.
\end{thm}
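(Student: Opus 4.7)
The plan is to split $\mathcal{E}(E)-\mathcal{E}(B_a)$ into its surface and potential contributions and treat each summand independently. Since $f\equiv 1$ on $\mathbb{S}^{n-1}$, the surface energy $\mathcal{F}(E)$ coincides with the De Giorgi perimeter $P(E)$; the sharp quantitative isoperimetric inequality of Fusco--Maggi--Pratelli in its translation-invariant form directly yields
$$
P(E)-P(B_a)\ \ge\ a_n\,m\,\inf_{z}\Big(\frac{|E\Delta(B_a+z)|}{m}\Big)^{2},
$$
which accounts for the Fraenkel summand appearing in (i)--(iii) with an explicit $a_n$.

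Next I would exploit the measure balance $|E\setminus B_a|=|B_a\setminus E|$ coming from $|E|=|B_a|=m$. Writing
$$
\mathcal{G}(E)-\mathcal{G}(B_a)=\int_{E\setminus B_a}h(|x|)\,dx-\int_{B_a\setminus E}h(|x|)\,dx,
$$
monotonicity of $h$ gives $h(|x|)\le h(a)$ on $B_a\setminus E$, which together with the mass balance produces $\mathcal{G}(E)-\mathcal{G}(B_a)\ge\int_{E\setminus B_a}[h(|x|)-h(a)]\,dx$; this proves (i). Rewriting $h(|x|)-h(a)\ge\int_a^{|x|}h'(t)\,dt$ (from the a.e.\ derivative of the monotone $h$) gives (ii). Under the extra convexity assumption on $h$, the supporting-line inequality $h(r)-h(a)\ge\partial_+h(a)(r-a)$ substituted pointwise in the potential integrand delivers (iii).

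The core step is (iv), where the convex-potential contribution alone must control the full \emph{untranslated} deficit $|E\Delta B_a|$. Discarding the nonnegative Fraenkel summand in (iii) leaves
$$
[\partial_+h(a)]^{-1}[\mathcal{E}(E)-\mathcal{E}(B_a)]\ \ge\ \int_{E\setminus B_a}(|x|-a)\,dx.
$$
By the bathtub principle applied to the weight $|x|-a$, the integral $\int_A(|x|-a)\,dx$ over $A\subset\mathbb{R}^n\setminus B_a$ of prescribed volume $s=|E\setminus B_a|$ is minimized by the concentric annulus $B_{a+\delta}\setminus B_a$ whose measure equals $s$; since $s\le m=|B_a|$, one checks the corresponding $\delta$ stays $\le a$. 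Bounding $r^{n-1}\ge a^{n-1}$ inside $\int_a^{a+\delta}(r-a)r^{n-1}\,dr$ and estimating $\delta$ from below in terms of $s$ via the annular volume formula then yields $\int_{E\setminus B_a}(|x|-a)\,dx\ge c(n)\,a^{-(n-1)}\,|E\setminus B_a|^{2}$; combined with $|E\Delta B_a|=2|E\setminus B_a|$, this produces (iv) with an explicit $\alpha_1(n,a)=2\sqrt{a^{n-1}/c(n)}$.

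The main obstacle is conceptual rather than technical: one must recognize that in the convex regime (iii) is already strong enough to control $|E\Delta B_a|$ by itself, \emph{without} invoking the Fraenkel-asymmetry term, and identify the annular rearrangement as the mechanism that converts the weighted linear deficit into a quadratic bound on $|E\Delta B_a|$. Once this is in hand, the constants for (i)--(iii) are essentially inherited from Fusco--Maggi--Pratelli and the cancellation $|E\setminus B_a|=|B_a\setminus E|$, while the $a$-dependence in $\alpha_1(n,a)$ of (iv) arises cleanly from the elementary annular calculation.
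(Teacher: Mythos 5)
Your decomposition into perimeter deficit (handled by the quantitative isoperimetric inequality of \cite{MR2672283}) plus potential deficit is exactly the paper's, and your treatment of (i)--(iii) is essentially the same: the paper derives $\mathcal{G}(E)-\mathcal{G}(B_a)\ge\int_{E\setminus B_a}[h(|x|)-h(a)]\,dx$ via the Brenier map $T$ pushing $\chi_{E\setminus B_a}\,dx$ onto $\chi_{B_a\setminus E}\,dx$ together with $|T(x)|\le a$, whereas you obtain the same inequality from the mass balance $|E\setminus B_a|=|B_a\setminus E|$ and $h\le h(a)$ on $B_a\setminus E$ --- an equivalent but more elementary step; (ii) and (iii) then follow identically in both arguments. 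The genuine divergence is in (iv). The paper proves $\int_{E\setminus B_a}(|x|-a)\,dx\ge A_*\,|E\setminus B_a|^2$ by slicing $E\setminus B_a$ in polar coordinates and using $r_{+,j}^n-r_{-,j}^n\le t_n(r_*)(r_{+,j}-r_{-,j})$, which makes the constant depend on the outer radius $r_*\le R$ of $E$; this forces a second case analysis in which an unbounded (or large) $E$ must be replaced by a bounded approximant $L$ via \cite{MR2672283}, with a separate argument bounding the optimal translation, before a uniform $R$ and hence a constant $\alpha_1(n,a)$ can be extracted. Your bathtub/annular-rearrangement argument reaches the same quadratic lower bound with a constant depending only on $n$ and $a$ from the outset: the weight $|x|-a$ is minimized on the annulus $B_{a+\delta}\setminus B_a$, and $\delta\le(2^{1/n}-1)a$ because $|E\setminus B_a|\le m$, so the bounded/unbounded dichotomy disappears and the explicit $\alpha_1(n,a)\sim a^{(n-1)/2}$ falls out cleanly. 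This is a real simplification of the paper's proof of (iv). Two caveats, both shared with the paper rather than introduced by you: (iv) must be read as vacuous when $\partial_+h(a)=0$, and the normalization $a_n m$ in the Fraenkel summand should be reconciled with the $m^{\frac{n-1}{n}}$ scaling that the quantitative isoperimetric inequality actually supplies.
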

\begin{proof}
To start, assume $h'$ exists.
If $S \subset \mathbb{R}^+$, $|S|<\infty$, note that the
monotonicity yields
$$
\int_S h(x)dx \ge \int_0^{|S|} h(x)dx.
$$
Therefore if $S^-=S \cap \{x<0\}$, $S^+=S \cap \{x\ge 0\}$, note
that since $h$ is
even
$$
\int_S h \ge \int_{-|S^-|}^{|S^+|} h=\int_0^{|S^+|}h+\int_0^{|S^-|}
h.
$$
Now set
$$
H(y)=\int_0^y h(x)dx
$$
and consider
$$
H''(y)=h'(y) \ge 0.
$$
Therefore via convexity,
\begin{align*}
H(\frac{1}{2}(|S^-|+|S^+|))& \le \frac{1}{2}(H(|S^-|)+H(|S^+|))\\
&=\frac{1}{2}\Big(\int_0^{|S^+|}h+\int_0^{|S^-|}h\Big).
\end{align*}
This then implies in view that $h$ is assumed even,
$$
\int_{-\frac{1}{2}(|S^-|+|S^+|)}^{\frac{1}{2}(|S^-|+|S^+|)}h \le
\int_S h.
$$
Assume $\{w_1, w_2, \ldots \}$ are directions which generate via
Steiner
symmetrization with respect to the planes through the origin and
with normal $w_i$,
$E_{w_1}, E_{w_2}, \ldots$ and
$$
E_{w_i} \rightarrow B_a.
$$
Set
$$
E_1=\{(x_2, \ldots, x_n): (x_1, x_2, \ldots, x_n) \in E\}
$$
$$E^{x_2, x_3, \ldots, x_n}=\{x_1: (x_1, x_2, \ldots, x_n) \in E\}.$$
Note that one may let $\frac{x_1}{|x_1|}=w_1$;
\begin{align*}
\int_E g dx&=\int_{E_1} \Big( \int_{E^{x_2, x_3, \ldots, x_n}} g
dx_1\Big) dx_2\ldots
dx_n\\
&\ge \int_{E_1} \Big( \int_{-\frac{|E^{x_2, x_3, \ldots, x_n}|}{2}}
^{\frac{|E^{x_2, x_3,
\ldots, x_n}|}{2}} g dx_1\Big) dx_2\ldots dx_n\\
&=\int_{E_{w_1}} g dx.
\end{align*}
Moreover, because $g$ is radial, one may rotate the coordinate $x_1$
so that $
\frac{x_1}{|x_1|}=w_2$, and iterate the argument above:
$$
\int_{E_{w_1}} g dx \ge \int_{E_{w_2}} g dx;
$$
in particular,
$$
\int_{E} g dx \ge \int_{B_a} g dx.
$$
Since symmetrization never increases the perimeter, this then proves
the first part. In
order to remove the differentiability assumption on $h$, let $h_k
\rightarrow h$ a.e.
with $h_k$ differentiable and also bounded. \\
\noindent \emph{Case 1: $E$ bounded}\\
\noindent Observe that for $E \subset B_A$ for an $A>0$
$$\mathcal{E}_k(E):=\int_{\partial^* E} d\mathcal{H}^{n-1}+\int_E
g_k(x)dx \ge
\mathcal{E}_k(B_a);$$
in particular, dominated convergence implies
$$
\int_{B_a} g_k(x)dx \rightarrow \int_{B_a} g(x)dx
$$
$$
\int_{E} g_k(x)dx \rightarrow \int_{E} g(x)dx
$$
and this yields
$$
\mathcal{E}(E) \ge \mathcal{E}(B_a).
$$
\noindent \emph{Case 2: $E$ unbounded} \\
\noindent If $E$ is not bounded, let $E_T=E \cap B_T$ and note that
for $T$ large,
$|E_T| \approx m$, therefore
$$
\mathcal{E}(E_T)\ge \mathcal{E}(B_{a_T}),
$$
with $|E_T|=|B_{a_T}|$.
Moreover, since $E$ is of finite perimeter, thanks to the monotone
convergence
theorem
$$
\mathcal{H}^{n-1}(\partial^* E \setminus B_T) \rightarrow 0;
$$
this implies
$$
\mathcal{H}^{n-1}(\partial^* E \cap \partial B_T) \rightarrow 0;
$$
now via
$$\mathcal{H}^{n-1}(\partial^* E_T)= \mathcal{H}^{n-1}(\partial^* E
\cap \partial B_T)+
\mathcal{H}^{n-1}(\partial^* E \cap B_T)$$
and
$$
\mathcal{H}^{n-1}(\partial^* E \cap B_T) \rightarrow \mathcal{H}
^{n-1}(\partial^* E),
$$
the subsequent is true
$$\mathcal{H}^{n-1}(\partial^* E_T) \rightarrow \mathcal{H}^{n-1}
(\partial^* E).$$
In particular
$$
\mathcal{E}(E_T) \rightarrow \mathcal{E}(E),
$$
and via dominated convergence
$$
\mathcal{E}(B_{a_T}) \rightarrow \mathcal{E}(B_a).
$$
If $\mathcal{E}(E)= \mathcal{E}(B_a),$ observe that if $E$ is not
$B_a$, the above
then implies that the energy is decreased with a symmetrization,
which generates a
contradiction. Thus $\mathcal{E}(E)= \mathcal{E}(B_a)$ iff $E=B_a$.
Suppose now that
$$T: E\setminus B_a \rightarrow B_a \setminus E$$
denotes the Brenier map between $\mu = \chi_{E\setminus B_a} dx$ and
$
\nu=\chi_{B_a \setminus E} dx$ \cite{pFZ4}. Thanks to
$$
T_{\#} \mu=\nu,
$$
$$
\int_{E \setminus B_a} h(|T(x)|)dx=\int_{B_a \setminus E} h(|x|)dx;
$$
thus, because $h$ is non-decreasing
\begin{align*}
\int_E h dx &= \int_{E \cap B_a} h dx + \int_{E \setminus B_a} h(|
x|)dx\\
&\ge \int_{E \cap B_a} h dx +\int_{E \setminus B_a} h(|T(x)|)dx\\
&= \int_{E \cap B_a} h dx+\int_{B_a \setminus E} h(|x|)dx\\
&=\int_{B_a} h dx.
\end{align*}
The above implies
$$
\int_E h dx-\int_{B_a} h dx \ge \int_{E \setminus B_a} [h(|x|)-
h(a)]dx
$$
via $|T(x)| \le a$.
Moreover, \cite{MR2672283} yields the inequality for $\mathcal{H}
^{n-1}(\partial^* E)$.
If
$$
h=h_J+h_A+h_c
$$
is the Lebesgue decomposition, $h_J$ is the jump function, $h_A$ is
absolutely
continuous, and $h_c$ is continuous and $h_c'(t)=0$ for a.e. $t$.
Because $h_J'(t)=0$ for a.e. $t$,
$$
h'(t)=h_A'(t)
$$
for a.e. $t$. In particular,
$$
h_A(|x|)-h_A(a)=\int_a^{|x|} h'(t)dt
$$
and this then implies
\begin{align*}
h(|x|)-h(a)&=h_A(|x|)-h_A(a)+h_J(|x|)-h_J(a)+h_c(|x|)-h_c(a)\\
& \ge h_A(|x|)-h_A(a)=\int_a^{|x|} h'(t)dt,
\end{align*}
therefore the statement in the theorem yields (ii).
Observe that if $h$ is convex, $h''(t) \ge 0$ for a.e. $t$. Thus
$h'$ is non-decreasing
which then implies (iii).
Next set $E_*=\text{proj}_{\mathbb{S}^{n-1}}(E \setminus B_a)$,
$E_\sigma=E\cap
L_\sigma$, $L_\sigma$ the line generated in terms of $\sigma \in
\mathbb{S}^{n-1}$.
\noindent \emph{Case 1: $E$ bounded}\\
Assume $E \subset B_{R}$, $r_{-,j}(\sigma), r_{+,j}
(\sigma) \ge a$ so that
\begin{align*}
|E \setminus B_a|&=\int_{E_*} \sum_j \int_{r_{-,j}(\sigma)}^{r_{+,j}
(\sigma)} r^{n-1} drd\sigma.
\end{align*}
Note that
\begin{align*}
r_{+,j}^n-r_{-,j}^n&=n(r_{-,j})^{n-1}(r_{+,j} -r_{-,j})+\cdots+
(r_{+,j} -r_{-,j})^n\\
& \le t_n(r_*) (r_{+,j} -r_{-,j})
\end{align*}
$$t_n(r_*)\ge \max \{n(r_{-,j})^{n-1},\ldots, (r_{+,j}
-r_{-,j})^{n-1}\},$$
$r_* \le R$;
\begin{align*}
\Big[\frac{1}{2} |E \Delta B_a|\Big|^{2}&=|E \setminus B_a|^2\\
&= \Big[\int_{E_*} \sum_j \int_{r_{-,j}(\sigma)}^{r_{+,j}(\sigma)}
r^{n-1} dr d\sigma
\Big]^2\\
&= \Big[\int_{E_*} \frac{1}{n}\sum_j(r_{+,j}^n(\sigma)-r_{-,j}
^n(\sigma))
d\sigma\Big]^2\\
&\le \Big[\frac{t_n(r_*)}{n} \int_{E_*}\sum_j (r_{+,j}(\sigma)
-r_{-,j}
(\sigma))d\sigma\Big]^2\\
&= \Big[\frac{t_n(r_*)}{n}\int_{E_*} {\mathcal{H}^1(E_\sigma)}
d\sigma \Big]^2 \\
&\le \frac{\mathcal{H}^{n-1}(\mathbb{S}^{n-1})t_n^2(r_*)}{n^2}
\int_{E_*}(\mathcal{H}
^1(E_\sigma))^2d\sigma\\
&\le \frac{\mathcal{H}^{n-1}(\mathbb{S}^{n-1})2t_n^2(r_*)}{a^{n-1}
n^2} \int_{E_*}
\int_0^{\mathcal{H}^1(E_\sigma)}r[r+a]^{n-1}drd\sigma \\
&=\frac{\mathcal{H}^{n-1}(\mathbb{S}^{n-1})2t_n^2(r_*)}{a^{n-1}n^2}
\int_{E_*}
\int_a^{a+\mathcal{H}^1(E_\sigma)}[r-a][r]^{n-1}drd\sigma \\
&\le \frac{\mathcal{H}^{n-1}(\mathbb{S}^{n-1})2t_n^2(r_*)}{a^{n-1}
n^2}
\sum_j\int_{E_*} \int_{r_{-,j}(\sigma)}^{r_{+,j}(\sigma)}[r-a]
[r]^{n-1}drd\sigma \\
&= \frac{\mathcal{H}^{n-1}(\mathbb{S}^{n-1})2t_n^2(r_*)}{a^{n-1}n^2}
\int_{E
\setminus B_a} [|x|-a] dx.
\end{align*}
Hence
set
$$
A_*=\left[\frac{\mathcal{H}^{n-1}(\mathbb{S}^{n-1})2t_n^2(r_*)}
{a^{n-1}n^2}\right]^{-1},
$$
it then follows that
$$
\mathcal{G}(E)-\mathcal{G}(B_a) \ge \partial_+h(a)A_*\Big[\frac{1}
{2} |E \Delta B_a|
\Big|^{2}.
$$
Next, define
$$
r_a=2\Big[\frac{1}{\partial_+h(a)A_*}\Big]^{\frac{1}{2}}.
$$
Then
$$
r_a\Big[\mathcal{G}(E)-\mathcal{G}(B_a)\Big]^{\frac{1}{2}} \ge |E
\Delta B_a|.
$$
Now let $x$ satisfy
$$
\frac{|E \Delta (B_a+x)|}{|E|} \le a(m,n)[\mathcal{H}^{n-1}(\partial
E)-\mathcal{H}^{n-1}
(\partial B_a)]^{\frac{1}{2}},
$$
$a(m,n)= \frac{\mu_n}{\sqrt{m}}$ and $\mu_n>0$ depends on the dimension \cite{MR2672283};
supposing
$$
\Big[\mathcal{E}(E)-\mathcal{E}(B_a)\Big] \ge s>0,
$$
where $s$ is a constant, observe
$$
\frac{|E \Delta (B_a+x)|}{|E|} \le 2,
$$
$$
|E \Delta B_a| \le 2m,
$$
yield the lower bound in (iv) with two constants depending on $s$,
$m$.
In particular, one may assume
$$
\Big[\mathcal{E}(E)-\mathcal{E}(B_a)\Big]
$$
is small (observe the constant $s>0$ is arbitrary, therefore one
may let $s$ be
any small constant). Supposing $|x|$ is small,
$$
a_{1,m}|x| \le |(B_a+x) \Delta B_a| \le a_{m}|x|,
$$
$$
\frac{a_{m}}{a_{1,m}} \le w_1
$$
for a universal $w_1>0$; 
\begin{align*}
a_{1,m}|x| &\le |(B_a+x) \Delta B_a| \\
&\le |E \Delta (B_a+x)|+|E\Delta B_a|\\
&\le a(m,n)\big[[ \mathcal{H}^{n-1}(\partial E)-\mathcal{H}^{n-1}
(\partial B_a)]
\big]^{\frac{1}{2}} |E|+r_a\Big[\mathcal{G}(E)-\mathcal{G}(B_a)
\Big]^{\frac{1}{2}}.
\end{align*}
In particular
\begin{align*}
|E\Delta B_a|& \le |E\Delta (B_a+x)| +|(B_a+x)\Delta B_a|\\
&\le a(m,n)\big[ \mathcal{H}^{n-1}(\partial E)-\mathcal{H}^{n-1}
(\partial B_a)
\big]^{\frac{1}{2}} |E|+a_{m}|x|\\
&\le a(m,n)\big[ \mathcal{H}^{n-1}(\partial E)-\mathcal{H}^{n-1}
(\partial B_a)
\big]^{\frac{1}{2}} |E|\\
&+\frac{a_{m}}{a_{1,m}}\Big[ a(m,n)\big[ \mathcal{H}^{n-1}(\partial E)-
\mathcal{H}^{n-1}
(\partial B_a)\big]^{\frac{1}{2}} |E|+r_a\Big[\mathcal{G}(E)-
\mathcal{G}(B_a)
\Big]^{\frac{1}{2}} \Big].
\end{align*}
One can now remove the assumption $E \subset B_R$. \\
\noindent \emph{Case 2: $E$ unbounded}\\
\noindent The initial part is to apply \cite{MR2672283} and obtain
the existence of
some $L \subset E$ that approximates $E$. There is a constant $e>0$
such that if
$$
\mathcal{E}(E)-\mathcal{E}(B_a) \le e
$$
$$
B_a+k \approx L,
$$
$\&$ $|E \setminus L|$ is bounded by $q \sqrt{\mathcal{E}(E)-
\mathcal{E}(B_a)}.$
Therefore if the translation $k$ has a uniform upper bound, one can
let $R$ depend
on $e, m, n$. The bound exists via:
suppose $k \rightarrow \infty$,
\begin{align*}
\infty &=\int_{B_a} \liminf g(y+k)dy \\
&\le \liminf \int_{B_a} [g(y+k)-g(y)]dy+\int_{B_a} g(y)dy\\
&\le w_1 \Big[\mathcal{E}(E)-\mathcal{E}(B_a)\Big]^{1/2}+||g||
_{L^1(B_a)}\\
&\le w_1e^{1/2}+||g||_{L^1(B_a)}
\end{align*}
hence this contradicts $||g||_{L^1(B_a)}<\infty$ since $g$ is nondecreasing
and
convex ($g(0)=0$, therefore without loss of generality, $g(x)>0$ for
an $x \in
\mathbb{R}^n$, which then implies that $g(x) \rightarrow \infty$
when $|x| \rightarrow
\infty$).
In this case, $L \approx E$, $L \subset B_R$, therefore the argument
above implies
$r_* \le R$ with a uniform bound on $R$.
\end{proof}
\begin{rem}
Inequality (iv) has the optimal exponent: suppose to simplify the
explicit calculation
that $n=1$, $B=(-1/2,1/2)$,
assume $E=B+x$,
$|x|$ sufficiently small; then the lower bound is of the order $|x|
$, and the free
energy for $g(a)=a^2$ is of the order
\begin{align*}
\mathcal{G}(B+x)-\mathcal{G}(B) &=\int_{-1/2}^{1/2} (g(x+q)-
g(q))dq=x^2.\\
\end{align*}
\end{rem}
\begin{rem}
The assumption in the classical Almgren problem is convexity of the
potential. Indrei constructed an example of a convex potential for
which one does not have existence of solutions. Thus necessarily one
requires more assumptions. In the theorem above, we obtain that
balls minimize the energy also when $g$ is non-decreasing, radial,
and possibly non-convex. Interestingly, that the minimizers are
balls is new. Also, the non-decreasing assumption is sharp: in the general case, convexity (of minimizers) is not true (e.g. one can construct a potential with a disconnected zero level).
\end{rem}
\begin{rem}
The potential has the byproduct of generating a quantitative
inequality without
translation invariance. Thus the quantitative anisotropic
isoperimetric inequality is
strongly different. One similar theorem is in \cite{MR3023863} where
translations are
crucial when one considers the quantitative isoperimetric inequality
in
convex cones: assuming the cone contains no line, the quantitative
term is without
translations. The characteristic also appears in the exponents of
the mass. In \cite{I20}, a weighted relative isoperimetric
inequality was proved.
\end{rem}
\begin{rem}
The bound in (iii) has the first moment of $E \setminus B_a$: $$
\int_{E
\setminus B_a} [|x|-a] dx=m_1(E \setminus B_a)-a|E \setminus B_a|.$$
\end{rem}
\begin{cor} \label{@4'}
Suppose $f(\nu)=1$ if $\nu \in \mathbb{S}^{n-1}$, $g(x)=h(|x|)$, $g:
\mathbb{R}^n
\rightarrow \mathbb{R}^+$ is non-decreasing and homogeneous of
degree $\alpha$. Let $m>0$ and assume $E_m$ is the minimizer with $|
E_m|=m$, set
$$
m_\alpha=|B_1|\Big[\frac{(n-1)\mathcal{H}^{n-1}(\partial B_1) }
{\alpha(\alpha+n)
\int_{B_1}h(|x|)dx}\Big]^{\frac{n}{1+\alpha}},
$$
it then follows that
$$
m \rightarrow \mathcal{E}(E_m)
$$
is concave on $(0,m_\alpha)$ and convex on $(m_\alpha, \infty)$.
\end{cor}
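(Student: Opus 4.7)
The plan is to reduce the problem to an explicit elementary computation using Theorem \ref{@'}. The theorem tells us that because $g$ is non-decreasing and radial, the unique minimizer with $|E_m|=m$ is the ball $B_{a(m)}$ centered at the origin, where $a(m)=(m/|B_1|)^{1/n}$. Indeed, any translate $B_a+z$ with $z\neq 0$ has the same perimeter as $B_a$ but, by monotone rearrangement, strictly larger potential energy, so only the centered ball can realize the infimum.

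Next I would compute $\mathcal{E}(B_a)$ in closed form. The perimeter part is $\mathcal{H}^{n-1}(\partial B_a)=a^{n-1}\mathcal{H}^{n-1}(\partial B_1)$, and since $g(x)=h(|x|)$ is homogeneous of degree $\alpha$, the change of variables $x=ay$ gives
\begin{equation*}
\int_{B_a}h(|x|)\,dx=a^{\alpha+n}\int_{B_1}h(|y|)\,dy.
\end{equation*}
Writing $\omega_n=|B_1|$, $P_n=\mathcal{H}^{n-1}(\partial B_1)$ and $I=\int_{B_1}h(|y|)\,dy$, and substituting $a(m)=(m/\omega_n)^{1/n}$, I obtain
\begin{equation*}
F(m):=\mathcal{E}(E_m)=P_n\,\omega_n^{-(n-1)/n}\,m^{(n-1)/n}+I\,\omega_n^{-(\alpha+n)/n}\,m^{(\alpha+n)/n}.
\end{equation*}

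With the exponents $p=(n-1)/n<1$ and $q=(\alpha+n)/n>1$, the first term is concave in $m$ and the second is convex. Differentiating twice,
\begin{equation*}
F''(m)=p(p-1)P_n\omega_n^{-p}m^{p-2}+q(q-1)I\,\omega_n^{-q}m^{q-2},
\end{equation*}
where $p(p-1)=-(n-1)/n^2$ and $q(q-1)=\alpha(\alpha+n)/n^2$. Setting $F''(m)=0$ and solving for $m$ gives
\begin{equation*}
m^{q-p}=m^{(\alpha+1)/n}=\frac{(n-1)P_n\,\omega_n^{q-p}}{\alpha(\alpha+n)\,I},
\end{equation*}
so the unique zero of $F''$ is precisely
\begin{equation*}
m_\alpha=\omega_n\left[\frac{(n-1)P_n}{\alpha(\alpha+n)I}\right]^{n/(\alpha+1)},
\end{equation*}
which matches the statement. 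Since the power $m^{p-2}$ beats $m^{q-2}$ near zero and is beaten at infinity, $F''<0$ on $(0,m_\alpha)$ and $F''>0$ on $(m_\alpha,\infty)$, giving concavity and convexity on the stated intervals.

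The only conceptual point that needs care is step one, namely arguing that the minimizer is the ball centered at the origin rather than some translate; this is immediate from Theorem \ref{@'} combined with the strict monotonicity of $h$, but should be stated explicitly. Everything after that is bookkeeping with scaling exponents.
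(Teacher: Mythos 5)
Your proposal is correct and follows essentially the same route as the paper: identify the minimizer as the ball $B_{(m/|B_1|)^{1/n}}$ via Theorem \ref{@'}, use the $\alpha$-homogeneity of $g$ to write $\mathcal{E}(E_m)$ as an explicit sum of powers $m^{(n-1)/n}$ and $m^{(\alpha+n)/n}$, and locate the sign change of the second derivative. The paper leaves the differentiation implicit, so your explicit computation of $F''$ and the solution of $F''(m)=0$ is just a fuller write-up of the same argument.
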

\begin{proof}
Observe $E_m=B_r$, $r=\Big[ \frac{m}{|B_1|}\Big]^{\frac{1}{n}}$,
which therefore
implies
$$
\mathcal{E}(E_m)=\mathcal{H}^{n-1}(\partial B_1)(\frac{1}{|B_1|
^{\frac{n-1}
{n}}})m^{\frac{n-1}{n}}+(\frac{1}{|B_1|^{\frac{\alpha+n}{n}}})
(\int_{B_1} h(|
x|)dx)m^{\frac{n+\alpha}{n}}.
$$
The critical mass $m_\alpha$ therefore may be calculated via the
second derivative.
\end{proof}
\begin{rem}
The existence of a critical mass was already discussed in
\cite{MR1641031},
nevertheless a specific value was not calculated nor was there some
proof of the
existence of the mass.
\end{rem}
\begin{prop} \label{Ko}
If $m>0$, $g \in L_{loc}^\infty$, and up to sets of measure zero,
$g$ admits unique
minimizers $E_m$, then for $\epsilon>0$ there exists
$w_m(\epsilon)>0$ such that if
$|E|=|E_m|$, $E \subset B_R$, $R=R(m)$, and
$$
\mathcal{A}(E,E_m):=|\mathcal{F}(E)-\mathcal{F}(E_m)+\int_E g(x)dx-
\int_{E_m}
g(x)dx|<w_m(\epsilon),
$$
then
$$
\frac{|E_m \Delta E|}{|E_m|}<\epsilon.
$$
\end{prop}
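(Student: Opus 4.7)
The plan is to execute a standard compactness/lower-semicontinuity contradiction argument, since everything takes place in the bounded region $B_R$.

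First, I will negate the conclusion: assume there is $\epsilon_0>0$ and a sequence $E_k\subset B_R$ with $|E_k|=|E_m|$, $\mathcal{A}(E_k,E_m)\to 0$, but $|E_m \Delta E_k|\geq \epsilon_0 |E_m|$ for each $k$. The first substantive step is to obtain a uniform BV bound on the $E_k$. Since $g\in L^\infty_{\mathrm{loc}}$ and $E_k\cup E_m\subset B_R$, the potential difference $|\int_{E_k}g - \int_{E_m}g|$ is controlled by $2\|g\|_{L^\infty(B_R)}|E_m|$, so $\mathcal{A}(E_k,E_m)\to 0$ forces $\mathcal{F}(E_k)$ to stay bounded. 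Combined with a Wulff-type lower bound $\mathcal{F}(E)\geq c(f)\mathcal{H}^{n-1}(\partial^* E)$ coming from convexity and $1$-homogeneity of $f$, this yields a uniform perimeter bound.

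Next I will apply BV compactness: a subsequence $\chi_{E_{k_j}}$ converges in $L^1(\mathbb{R}^n)$ to some $\chi_{E_\infty}$, where $E_\infty\subset \overline{B_R}$ and $|E_\infty|=|E_m|$. To pass to the limit in the energy I will invoke Reshetnyak-type lower semicontinuity of the anisotropic surface functional to get $\mathcal{F}(E_\infty)\leq \liminf_j \mathcal{F}(E_{k_j})$, and dominated convergence (using the $L^\infty(B_R)$-bound on $g$) to conclude $\int_{E_{k_j}}g \to \int_{E_\infty}g$. Combined with $\mathcal{A}(E_{k_j},E_m)\to 0$, these give $\mathcal{E}(E_\infty)\leq \mathcal{E}(E_m)$. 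Since $|E_\infty|=m$ and $E_m$ is a minimizer, equality must hold, and the uniqueness hypothesis then forces $E_\infty = E_m$ up to a null set, contradicting $|E_m\Delta E_{k_j}|\geq \epsilon_0|E_m|$ for every $j$.

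The main obstacle will be a clean justification of the anisotropic lower semicontinuity, since $f$ is only assumed convex, $1$-homogeneous and nonnegative and hence may degenerate in some directions; the most elementary semicontinuity statements ask for a uniform coercivity that need not hold. I plan to address this by applying the general Reshetnyak lower semicontinuity theorem to the vector-valued measures $\nu_{E_{k_j}}\,\mathcal{H}^{n-1}\lfloor \partial^* E_{k_j}$, which converge weakly-$\ast$ to $\nu_{E_\infty}\,\mathcal{H}^{n-1}\lfloor \partial^* E_\infty$ thanks to the uniform BV bound and the $L^1$ convergence of indicators; a convex, positively $1$-homogeneous integrand then automatically yields the required semicontinuity without any pointwise positivity assumption on $f$.
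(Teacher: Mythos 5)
Your argument is the same compactness--lower-semicontinuity contradiction the paper uses: negate the conclusion, extract an $L^1(B_R)$-convergent subsequence from the uniform perimeter bound, pass to the limit in the energy, and invoke uniqueness of the minimizer. Your extra care about degenerate $f$ (Reshetnyak for the weak-$*$ convergent Gauss--Green measures) is a welcome refinement, though note that the uniform BV bound in your first step still tacitly needs $\min_{\mathbb{S}^{n-1}} f>0$, the same standing assumption the paper makes implicitly when it deduces compactness from the bound on $\mathcal{F}$.
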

\begin{proof}
Assume not, then there exists $\epsilon>0$ and for $w>0$, there
exist $E_w' \subset
B_R$ and $E_m$, $|E_w'|=|E_m|=m$,
$$
\mathcal{A}(E_w',E_m)<w,
$$
but
$$
\frac{|E_m\Delta E_w'|}{|E_m|} \ge \epsilon;
$$
in particular, set $w=\frac{1}{k}$, $k \in \mathbb{N}$; observe that
there exist
$E_{\frac{1}{k}}'$, $|E_{\frac{1}{k}}'|=m$,
$$
|\mathcal{E}(E_m)-\mathcal{E}(E_{\frac{1}{k}}')|\le \mathcal{A}
(E_{\frac{1}
{k}}',E_m)<\frac{1}{k},
$$
$$
\frac{|E_m\Delta E_{\frac{1}{k}}'|}{|E_m|} \ge \epsilon;
$$
thus
$$
\mathcal{F}(E_{\frac{1}{k}}') \le \mathcal{E}(E_{\frac{1}{k}}') <
\frac{1}{k}+\mathcal{E}
(E_m),
$$
$$E_{\frac{1}{k}}' \subset B_R,$$
and compactness for sets of finite perimeter implies --up to a
subsequence--
$$
E_{\frac{1}{k}}' \rightarrow E' \hskip .3in in \hskip .08in
L^1(B_R),
$$
and therefore $|E'|=m$ because of the triangle inequality in
$L^1(B_R)$; thus
$$
\mathcal{E}(E') \le \liminf_k \mathcal{E}(E_{\frac{1}{k}}')
=\mathcal{E}(E_m),
$$
which implies
$E'$ is a minimizer contradicting
$$
\frac{|E_m\Delta E'|}{|E_m|} \ge \epsilon
$$
via the uniqueness of minimizers.
\end{proof}
Identifying the modulus $w_m(\epsilon)$ is in general very
difficult. The subsequent
theorem is about sharp error estimates for the unique minimizer
regime.
\begin{thm} \label{@7z'}
Suppose $m>0$, and up to sets of measure zero, $g$ admits unique
minimizers
$E_m \subset B_{r(m)}$,
then\\
\noindent (i) there exists $\alpha_m>0$ s.t. if $g \in W_{loc}
^{1,1}$ is differentiable, $
\epsilon< \alpha_m$,
$$
w_m(\epsilon) \le \lambda(m, \epsilon)=o(\frac{\epsilon m}
{a_{m_*}}),
$$
$$
a_{m_*}=\alpha_*|D \boldsymbol{1}_{E_m} \cdot w_*|(\mathbb{R}^n)
$$
with a universal constant $\alpha_*>0$;\\
\noindent (ii)
there exists $m_a>0$ s.t. if $n=2$, $m<m_a$, $g \in W_{loc}
^{1,\infty}$, then
$$\lambda(m, \epsilon)= \alpha m^{\frac{1}{2}}o_m(\epsilon),$$
$$o_m(\epsilon) =\epsilon \int_{E_m} o_x(1) dx,$$
$$
\int_{E_m} o_x(1) dx \rightarrow 0
$$
as $\epsilon \rightarrow 0^+$;
and assuming $g \in C^1$,
$$\lambda(m, \epsilon)= \alpha m^{\frac{3}{2}}o(\epsilon),$$
$\alpha=\alpha(||D g||_{L^\infty(B_{r(m_a)})})>0;$
\\
\noindent (iii) there exist $m_a, \alpha_m>0$ s.t. if $n \ge 3$,
$m<m_a$, $\epsilon<
\alpha_m$, $g \in C^1$, $f$ is $\lambda_a$-elliptic, then
$$\lambda(m, \epsilon)= \alpha m^{1+\frac{1}{n}}o(\epsilon),$$
$\alpha=\alpha(||D g||_{L^\infty(B_{r(m_a)})}, \lambda_a)>0$;\\
\noindent (iv) there exists $\alpha_m>0$ s.t. if $g \in W_{ loc}
^{2,1}$ is twice
differentiable, $\epsilon< \alpha_m$,
$$
w_m(\epsilon) \le a_m \epsilon^2,
$$
$$a_m=\Big(||D^2 g||_{L^1(E_m)} +m\frac{1}{6} \Big) (\frac{m}
{\alpha_*|D
\boldsymbol{1}_{E_m} \cdot w_*|(\mathbb{R}^n)} )^2$$
with $\alpha_*>0$ universal;\\
(v) there exists $m_a>0$ s.t. if $n=2$, $g \in W_{loc}^{2, \infty}$,
$m<m_a$,
$$
w_m(\epsilon) \le a_m \epsilon^2,
$$
$$
a_m=a_*m^{2},
$$
$a_*=a_*(||D^2 g||_{L^\infty(B_{r(m_a)})})>0$;\\
(vi) there exists $m_a>0$ s.t. if $f$ is $\lambda_a$-elliptic, $g
\in W_{loc}^{2, \infty}$,
$m<m_a$,
$$
w_m(\epsilon) \le a_m \epsilon^2,
$$
$$
a_m=a_*m^{1+\frac{2}{n}},
$$
$a_*=a_*(||D^2 g||_{L^\infty(B_{r(m_a)})}, \lambda_a)>0$.
\end{thm}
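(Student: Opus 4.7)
The plan is to test the translates $E := E_m + z$ of the unique minimizer as competitors. For $|z|$ small, $|E| = m$ and (after enlarging $R(m)$) $E \subset B_{R(m)}$; the perimeter part $\mathcal{F}$ being translation invariant,
$$\mathcal{A}(E_m + z, E_m) = \Big|\int_{E_m} \big[g(x + z) - g(x)\big] \, dx\Big|.$$
The $BV$ expansion $|E_m \Delta (E_m + z)| = (1 + o(1)) |z| \cdot |D \boldsymbol{1}_{E_m} \cdot \hat z|(\mathbb{R}^n)$ holds as $|z| \to 0$. Selecting $\hat z = w_*$ to minimise this trace, and $|z| \simeq \epsilon m / a_{m_*}$ with $a_{m_*} = \alpha_* |D \boldsymbol{1}_{E_m} \cdot w_*|(\mathbb{R}^n)$ and $\alpha_*$ the universal constant hidden in the above expansion, pins $|E_m \Delta E| \simeq \epsilon m$.

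Uniqueness of $E_m$ applied to this translation family yields the Euler-Lagrange identity $\int_{E_m} \nabla g \, dx = 0$, obtained by differentiating the convolution $(g \ast \boldsymbol{1}_{-E_m})(z)$ at $z = 0$ (valid for $g \in W^{1,1}_{loc}$ and $E_m$ bounded). Writing $g(x + z) - g(x) - \nabla g(x) \cdot z = |z|\, \eta(x, z)$ with $\eta(x,z) \to 0$ a.e., dominated convergence gives $\int_{E_m} \eta(x, z) \, dx = o(1)$, hence $\mathcal{A}(E_m + z, E_m) \le |z| \cdot o(1) \cdot m = o(\epsilon m / a_{m_*})$, which is (i). For (iv) the integral remainder
$$g(x + z) - g(x) - \nabla g(x) \cdot z = \int_0^1 (1 - t) \, z^T D^2 g(x + tz) z \, dt,$$
a Fubini swap, and the shift control $\|D^2 g\|_{L^1(E_m + tz)} \le \|D^2 g\|_{L^1(E_m)} + \tfrac{1}{6}\, m$ on $|z| < \alpha_m$ give $\mathcal{A} \le \bigl(\|D^2 g\|_{L^1(E_m)} + \tfrac{1}{6} m\bigr) |z|^2$, and substituting $|z|^2 = (\epsilon m / a_{m_*})^2$ yields the coefficient $a_m$ of (iv).

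The refinements (ii), (iii), (v), (vi) follow once $a_{m_*}$ is controlled at small mass. Comparing $E_m$ with the Wulff shape $W_m$ of volume $m$, one has $\mathcal F(E_m) \le \mathcal F(W_m) + \|g\|_{L^\infty(B_{r(m_a)})}\, m \le C m^{(n-1)/n}$ for $m < m_a$. $\lambda_a$-ellipticity in $n \ge 3$ upgrades this to $\mathrm{Per}(E_m) \le C(\lambda_a) m^{(n-1)/n}$ (in $n = 2$ the 2D comparison is direct and needs no ellipticity), so $|D \boldsymbol{1}_{E_m} \cdot w_*|(\mathbb{R}^n) \le \mathrm{Per}(E_m)$ and $m / a_{m_*} \ge c m^{1/n}$. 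Plugging into (i) produces $\lambda \le o(\epsilon m^{1 + 1/n})$ of (iii), and in $n = 2$ with $g \in W^{1,\infty}_{loc}$ the bound $\lambda \le \epsilon m^{1/2} \int_{E_m} o_x(1) dx$ of (ii); under $g \in C^1$ the pointwise $\eta$ becomes \emph{uniform} $o(1)$ on $B_{r(m_a)}$, gaining a factor $m$ and producing $\alpha m^{3/2} o(\epsilon)$. Parallel substitution into (iv) with $|z|^2 \le C \epsilon^2 m^{2/n}$ gives $a_* m^{1 + 2/n}$ in (vi) and $a_* m^2$ in (v).

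The main obstacle is tracking Taylor remainders with sharp constants at the minimal Sobolev regularity: in (i), extracting an integrable $L^1$-vanishing of $\eta$ from a.e.\ pointwise decay requires all shifts $E_m + tz$ to remain inside a fixed compact where a domination in $\nabla g$ is available; in (iv) the constants $\tfrac{1}{2}$ and $\tfrac{1}{6}$ must pass through the Fubini swap and the shift-tail of $\|D^2 g\|_{L^1}$ without inflation, which is what pins down the exact $a_m$. A secondary difficulty is the Euler-Lagrange identity at $W^{1,1}_{loc}$ regularity --- handled by the convolution-differentiation observation above --- and, for (ii), (iii), (v), (vi), verifying that the small-mass comparison $\mathrm{Per}(E_m) \le C m^{(n-1)/n}$ genuinely holds with the claimed dependence on $\lambda_a$ and $\|D^k g\|_{L^\infty(B_{r(m_a)})}$ only through the announced absolute constants.
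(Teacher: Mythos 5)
Your strategy for (i) and (iv) matches the paper's: translate the unique minimizer, note that $\mathcal{F}$ is translation invariant, kill the linear term in the Taylor expansion of $z\mapsto\int_{E_m+z}g$, and convert $|z|$ into $\epsilon$ via the lower bound $a_{m_*}|z|\le|(E_m+z)\Delta E_m|$ from \cite[Lemma 3.2]{MR3023863}. Your way of killing the linear term (the Euler--Lagrange identity $\int_{E_m}\nabla g\,dx=0$, obtained by differentiating $z\mapsto\mathcal{G}(E_m+z)$ at its minimum) is a legitimate alternative to the paper's device of simply choosing $y\in(\int_{E_m}\nabla g\,dx)^{\perp}$; both work. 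One local flaw in (iv): your ``shift control'' $\|D^2g\|_{L^1(E_m+tz)}\le\|D^2g\|_{L^1(E_m)}+\tfrac16 m$ is not a true inequality and is not how the constant arises; the $\tfrac16$ comes from absorbing the Peano remainder $o_x(|y_0|^2)/|y_0|^2$ (made uniformly small on $\overline{E_m}$ by twice-differentiability and compactness) into the quadratic term.

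The genuine gap is in (ii), (iii), (v), (vi), and it is a gap in direction as well as in substance. You control $a_{m_*}$ from \emph{above} by $\mathrm{Per}(E_m)\le Cm^{(n-1)/n}$ and conclude $m/a_{m_*}\ge cm^{1/n}$ --- but that is a lower bound on the quantity $|y|=\epsilon m/a_{m_*}$ you must make small, so it proves nothing. What is needed is the \emph{lower} bound $a_{m_*}\ge c\,m^{(n-1)/n}$, i.e.\ the claim (\ref{es}) that $a_{m_*}=C\,\mathcal{H}^{n-1}(\partial E_m)$ with $C$ depending only on the shape of $E_m$ and not on the mass. Establishing this is the bulk of the paper's proof: one invokes \cite{MR2807136} to get convexity of $E_m$ for small mass ($\lambda_a$-ellipticity enters only here, for $n\ge3$; in $n=2$ convexity is free), and then proves for a convex body $A$ that $|(A+y)\Delta A|\ge C_0C_1\mathcal{H}^{n-1}(\partial A)\,|y|$ for $|y|\le s(n,A)$, via the monotonicity of $g_w(t)=\int_{\partial(A-tw)\cap A}\langle\nu_A(x+tw),w\rangle\,d\mathcal{H}^{n-1}$, the a.e.\ identity $f_w'(t)=2g_w(t)$ for $f_w(t)=|(A+tw)\Delta A|$, and a superdifferential analysis of the boundary graphs to bound $\inf_w g_w(s)$ below by a fixed multiple of $\mathcal{H}^{n-1}(\partial A)$. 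Without this (or some substitute yielding a mass-independent constant in the lower bound for $|(E_m+y)\Delta E_m|/|y|$, valid on a range $|y|\le t_{E_m}$ that does not degenerate), the refined exponents $m^{3/2}$, $m^{1+1/n}$, $m^2$, $m^{1+2/n}$ cannot be extracted from (i) and (iv).
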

\begin{proof}
For (i),
$$y \in (\int_{E_m} \nabla g(x)dx)^{\perp},$$
\begin{align*}
\Big|\int_{E_m+y} g(x)dx-\int_{E_m} g(x)dx\Big|\\
&= \int_{E_m} g(x+y)-g(x)dx\\
&=y\cdot \int_{E_m} \nabla g(x)dx+\int_{E_m} o_x(|y|)dx\\
&=\int_{E_m} o_x(|y|)dx.
\end{align*}
Now there is an $\alpha(x)>0$ s.t. for $|y|<\alpha(x)$,
$$\frac{|o_x(|y|)|}{|y|} \le 1;$$
also, $x \rightarrow \alpha(x)$ is continuous (via the
differentiability of $g$). In
particular, via the compactness of $\overline{E_m}$,
$$
\alpha:=\inf_{E_m} \alpha(x)>0;
$$
therefore, if $|y|<\alpha$,
$$\sup_{x \in E_m} \frac{|o_x(|y|)|}{|y|} \le 1;$$
hence the dominated convergence theorem implies
$$
\limsup_{|y| \rightarrow 0} \int_{E_m} o_x(|y|)/|y| dx =0.
$$
In particular, there exists $\alpha>0$, such that with $|y|<\alpha$
$$
\Big|\int_{E_m+y} g(x)dx-\int_{E_m} g(x)dx\Big|= o(|y|);
$$
observe there exists $t_{E_m}>0$ such that
$$
a_{m_*}|y| \le |(E_m+y) \Delta E_m|
$$
when $|y| \le t_{E_m}$ \cite[Lemma 3.2]{MR3023863},
$$
a_{m_*}=\alpha_*|D \boldsymbol{1}_{E_m} \cdot w_*|(\mathbb{R}^n),
$$
$w_* \in \mathbb{R}^n$;
therefore there then exists $\alpha_{m}=t_{E_m} a_{m_*}/m>0$ so that
if $\epsilon <
\alpha_{m}$,
$|y|$ may be taken so that
$$
\frac{a_{m_*}}{m}|y|=\epsilon;
$$
hence assuming $\epsilon < \alpha_{m}$,
$$
w_m(\epsilon) \le \Big|\int_{E_m+y} g(x)dx-\int_{E_m} g(x)dx\Big|
=o(|
y|)=o(\frac{\epsilon m}{a_{m_*}})
$$
thus proving (i). Observe that in order to prove (ii),
it is then sufficient to show
\begin{equation} \label{es}
a_{m_*}= C \mathcal{H}^{n-1}(\partial E_m),
\end{equation}
for a constant independent of mass. In order to see why, supposing
this is valid
\begin{align*}
a_{m_*}&=C\mathcal{H}^{n-1}(\partial E_m)\\
& \approx \mathcal{H}^{n-1}(\partial \mu_m K) \\
&= \mu_m^{n-1}\mathcal{H}^{n-1}(\partial K)= (\frac{m}{|
K|})^{\frac{n-1}{n}}
\mathcal{H}^{n-1}(\partial K)
\end{align*}
via $|\mu_m K|=m$ (where $K$ is the Wulff shape).
Note that one can control the $o_x(|y|)$ remainder as above:
$$ \sup_{x \in E_m} \frac{|o_x(|y|)|}{|y|} \le 2A$$(since $g \in W_{loc}
^{1,\infty}$, $A=||\nabla g||
_{L^\infty(B_r)}$).
Hence the dominated convergence theorem implies
$$
| y| \int_{E_m} o_x(|y|)/|y| dx = |y|o_m(1) =\frac{\epsilon m}
{a_{m_*}}o_m(1)
$$
\begin{align*}
(\frac{\epsilon m}{a_{m_*}})o_m(1)&=(\frac{\epsilon m}{\bar{C}
m^{\frac{n-1}
{n}}})o_m(1)\\
&=\frac{o_m(\epsilon)}{\bar{C}} m^{1/n};
\end{align*}
supposing $g \in C^1$,
$$
\sup_{x\in E_m} \frac{o_x(|y|)}{|y|} \rightarrow 0,
$$
as $|y| \rightarrow 0$; thus
$$
o_m(1)=m o(1),
$$
$o(1) \rightarrow 0$ as $\epsilon \rightarrow 0$.
In particular
$$m (\frac{\epsilon m}{a_{m_*}})o(1)=m (\frac{\epsilon m}{\bar{C}
m^{\frac{n-1}
{n}}})o(1)=\frac{o(\epsilon)}{\bar{C}} m^{1+1/n}.
$$
Assuming $n=2$, $m$ is small, the minimizer $E_m$ is convex
\cite{MR2807136}.
In particular, it suffices to prove \eqref{es} when $E_m$ is convex in general.
Let $A=E_m$.
For $t >0$ and $w \in \mathbb{S}^{n-1}$, define $$f_w(t):=|(A+tw)
\Delta A|.$$ Note
$$f_w(t) = \int_{\mathbb{R}^n} |\mathbf{1}_{(A+ tw)}(x) - \mathbf{1}
_{A}(x)| dx =
\int_{\mathbb{R}^n} |\mathbf{1}_{A}(x+tw) - \mathbf{1}_{A}(x)| dx.$$
Furthermore, for $t>0$, set $$g_w(t):=\int_{\partial(A-tw)\cap A}
\displaystyle \langle
\nu_{A}(x+tw),w \rangle d\mathcal{H}^{n-1}(x),$$ where $\nu_{A}$ is
the $\it{outer}$
unit normal to the boundary of $A$.
\noindent To begin: for $s>0$, if $0<t < s$, then
\begin{equation} \label{n47}
\partial(A-sw)\cap A +(s-t)w \subset \partial(A-tw)\cap A.
\end{equation}
Indeed, set $x=y+(s-t)w$ with $y \in \partial(A-sw)\cap A$. Thus,
$y=a-sw,$ with
$a\in \partial A$ and $x=a-tw$. Since $y \in A$, convexity implies
that for all $\lambda
\in (0,1)$, $$\lambda y+(1-\lambda)a \in A.$$ But
$\lambda y+(1-\lambda)a= \lambda (a-sw)+(1-\lambda)a=a-\lambda sw$.
Letting $
\lambda=\frac{t}{s}$, $a-tw=x \in A$,
and this completes the proof of the claim. Note that by convexity of
$A$,
$$\displaystyle \langle \nu_{A}(x+tw),w \rangle \geq 0$$
for all
$x \in (\partial A-tw)\cap A$
(if $y \in \partial A$ and $\nu_{A}(y)\cdot w<0$, then $y-tw \in \{z :\langle z-y,\nu_A(y)
\rangle >0\}$, and the latter is disjoint from $A$).
Therefore, by the change of variable $x=y+(s-t)w$ and (\ref{n47}),
\begin{align*}
g_w(s)&= \int_{\partial(A-sw)\cap A} \displaystyle \langle \nu_{A}
(y+sw),w \rangle
d\mathcal{H}^{n-1}(y)\\
&= \int_{\partial(A-sw)\cap A +(s-t)w} \displaystyle \langle \nu_{A}
(x+tw),w \rangle
d\mathcal{H}^{n-1}(x)\\
&\leq g_w(t),
\end{align*}
provided $0<t<s$. Next, for a particular value of $s=s(n, A)>0$,
\begin{equation} \label{rrr03}
\displaystyle \inf_{w \in \mathbb{S}^{n-1}} g_w(s) >0.
\end{equation}
\noindent Indeed, let $w \in \mathbb{S}^{n-1}$, and for $y \in
\partial A$ find $r_y>0$
so that $B_{r_y}(y) \cap \partial A$ is the graph of a concave
function $u_y:
\mathbb{R}^{n-1} \rightarrow \mathbb{R}$. Upon a possible
reorientation, we have
$$A \cap B_{r_y}(y)=\{x \in B_{r_y}(y) : x_n <
u_y(x_1,\ldots,x_{n-1})\}.$$ $\tilde{B}
_{\beta}(\alpha)$ denotes the open ball in $\mathbb{R}^{n-1}$
centered at $\alpha$
with radius $\beta$ in the following argument.
Since $$\partial A \subset \displaystyle \bigcup_{y \in \partial A}
B_{r_y/2}(y),$$ by
compactness of $\partial A$ there exists $N \in \mathbb{N}$, $\{y_k\}_{k=1}^{N}
\subset \partial A $ such that $\partial A \subset \cup_{k=1}^N B_{r_k/2}(y_k)$.
Denote the corresponding concave functions by $\{u_k\}_{k=1}^{N}$,
with the
consensus that $\partial A \cap B_{r_k}(y_k)$ is the graph of $u_k$.
Next, let $y^* \in
\partial A$ be such that the normal to a supporting hyperplane at
$y^*$ is $w$.
Then $y^* \in \partial A \cap B_{r_j/2}(y_j)$ for a $j \in
\{1,2,\ldots,N\}$. Moreover,
$u_j : \tilde{B}_{r_j}(\hat{x}_j) \rightarrow \mathbb{R}$, where $
(\hat{x}_j, u_j(\hat{x}))
= y_j$,
\begin{equation}
A \cap B_{r_j}(y_j)=\{x \in B_{r_j}(y) : x_n < u(x_1,\ldots,x_{n-1})
\}.
\end{equation}
Next, let $\hat{x}^* \in \tilde{B}_{r_j/2}(\hat{x}_j)$ be such that
$(\hat{x}^*, u_j(\hat{x}
^*))=y^*$, $r=r(n, A):= \displaystyle \min_k r_k/4$, \& $s=s(n,
A):=r/4$. Since $\hat{x}
^* \in \tilde{B}_{r_j/2}(\hat{x}_j)$, $\tilde{B}_{r}(\hat{x}^*)
\subset \tilde{B}_{3r_j/4}
(\hat{x}_j)$. Denote the superdifferential of a concave function $
\phi: \mathbb{R}
^{n-1} \rightarrow \mathbb{R}$ at a point $\hat{x}$ in its domain by
$$\partial^{+}
\phi(\hat{x}) := \{\hat{y} \in \mathbb{R}^{n-1} : \forall \hat{z}
\in \mathbb{R}^{n-1},
\phi(\hat{z}) \leq \phi(\hat{x})+\langle \hat{y}, \hat{z}-\hat{x}
\rangle \}.$$ Because $w$
is the normal to a supporting hyperplane of $A$ at $(\hat{x}^*,
u_j(\hat{x}^*))$, there
exists $\nabla^{+} u_j(\hat{x}^*) \in \partial^{+} u_j(\hat{x}^*)$
such that $w=\frac{(-
\nabla^{+} u_j(\hat{x}^*),1)}{\sqrt{1+|\nabla^{+} u_j(\hat{x}^*)|
^2}}=:(\hat{w}, w^\perp)
$. Up to an infinitesimal rotation, without loss $|\nabla^{+}
u_j(\hat{x}^*)| >0$. Let
$e_1 = \frac{\nabla^{+} u_j(\hat{x}^*)}{|\nabla^{+} u_j(\hat{x}^*)|}
$ \&
$\{e_2,\ldots,e_{n-1}\}$ be a corresponding orthonormal basis for $
\mathbb{R}^{n-1}$.
Define $$N_{\epsilon}:= \bigg \{ \hat{z} \in \tilde{B}_{r}(\hat{x}
^*) : \hat{z}-\hat{x}
^*=\gamma_1 e_1 + \sum_{i=2}^{n-1} \gamma_i e_i, \hskip .1in \big |
\sum_{i=2}
^{n-1} \gamma_i e_i \big| \leq \epsilon, \hskip .1in -\frac{r}{4}
\geq \gamma_1\bigg\},
$$ and $$\tilde{N}_{\epsilon}:= \bigg \{ \hat{z} \in \tilde{B}_{r}
(\hat{x}^*) : \hat{z}-
\hat{x}^*=\gamma_1 e_1 + \sum_{i=2}^{n-1} \gamma_i e_i, \hskip .1in
\big |
\sum_{i=2}^{n-1} \gamma_i e_i \big| \leq \epsilon, \hskip .1in -
\frac{r}{2} \geq
\gamma_1 \bigg \}.$$
These are rectangular sets inside a region.
Let $\hat{z} \in \tilde{N}_\epsilon$, and note
\begin{align}
u_j(\hat{z}) &\leq u_j(\hat{z}-s\hat{w})+\langle
\nabla^+u_j(\hat{z}-s\hat{w}), s\hat{w}
\rangle \nonumber \\
&=u_j(\hat{z}-s\hat{w})-\frac{s}{\sqrt{1+|\nabla^+u_j(\hat{x}^*)|
^2}} \langle
\nabla^+u_j(\hat{z}-s\hat{w}), \nabla^+u_j(\hat{x}^*) \rangle.
\label{fin8}
\end{align}
Furthermore, if $\hat{z} \in \tilde{B}_{r}(\hat{x}^*)$, there exists
$\nabla^{+}
u_j(\hat{z}) \in \partial^{+} u_j(\hat{z})$ such that $\nu_{A}
((\hat{z},
u_j(\hat{z})))=\frac{(-\nabla^{+} u_j(\hat{z}), 1)}{\sqrt{1+|
\nabla^{+} u_j(\hat{z})|^2}}.$
Thus,
\begin{align}
\langle \nu_{A}((\hat{z}, u_j(\hat{z}))), w \rangle &= \bigg \langle
\frac{(-\nabla^{+}
u_j(\hat{z}), 1)}{\sqrt{1+|\nabla^{+} u_j(\hat{z})|^2}}, \frac{(-
\nabla^{+} u_j(\hat{x}^*),1)}
{\sqrt{1+|\nabla^{+} u_j(\hat{x}^*)|^2}} \bigg \rangle \nonumber\\
&= \frac{1+ \nabla^{+} u_j(\hat{z}) \cdot \nabla^{+} u_j(\hat{x}^*)}
{\sqrt{1+|\nabla^{+}
u_j(\hat{z})|^2} \sqrt{1+|\nabla^{+} u_j(\hat{x}^*)|^2}}
\label{aaa2n}
\end{align}
If $\tilde{C}=\displaystyle \max_k \{Lip_{\frac{3}{4}}(u_k)\}>0$,
where $Lip_{\frac{3}
{4}}(u_k)$ denotes the Lipschitz constant of $u_k$ on $\tilde{B}
_{\frac{3}{4}r_k}
(\hat{x}_k)$ (recall $\hat{x}_k:=u_k^{-1}(y_k)$), then since $
\displaystyle
\sup_{\hat{z} \in \tilde{B}_{r}(\hat{x}^*)} |\nabla^{+}
u_j(\hat{z})| \leq \tilde{C}$,
\begin{equation} \label{s2b2}
\frac{1}{\sqrt{1+|\nabla^{+} u_j(\hat{z})|^2} \sqrt{1+|\nabla^{+}
u_j(\hat{x}^*)|^2}} \geq
\frac{1}{1+\tilde{C}^2}=:C_0.
\end{equation}
Next, the monotonicity formula for the superdifferential of the
concave function $u_j$
yields
$$\langle \nabla^{+} u_j(\hat{z}) - \nabla^{+} u_j(\hat{x}
^*), \hat{z}-\hat{x}^*
\rangle \leq 0,$$
and since
$$\langle \nabla^{+} u_j(\hat{x}^*),
\hat{z}-\hat{x}^* \rangle
\leq 0$$
for $\hat{z} \in N_\epsilon$,
\begin{align*}
0& \geq \langle \nabla^{+} u_j(\hat{z}), \hat{z}-\hat{x}^* \rangle =
\langle \nabla^{+}
u_j(\hat{z}), \gamma_1 e_1 + \sum_{i=2}^{n-1} \gamma_i e_i \rangle\\
&\geq \frac{\gamma_1}{|\nabla^{+} u_j(\hat{x}^*)|} \langle
\nabla^{+} u_j(\hat{z}),
\nabla^{+} u_j(\hat{x}^*) \rangle - C_0\epsilon.
\end{align*}
As $0<\frac{r}{4} \leq -\gamma_1$,
\begin{equation} \label{a3r}
\langle \nabla^{+} u_j(\hat{z}), \nabla^{+} u_j(\hat{x}^*) \rangle
\geq -
\frac{4C_0\epsilon|\nabla^{+} u_j(\hat{x}^*)|}{r} \geq -\frac{4C_0
\tilde{C} \epsilon}{r}.
\end{equation}
Note that if $\hat{z} \in \tilde{N}_\epsilon$, $\hat{z}-s\hat{w} \in
N_\epsilon$ so by
(\ref{a3r}) $$\langle \nabla^{+} u_j(\hat{z}-s\hat{w}), \nabla^{+}
u_j(\hat{x}^*) \rangle
\geq -\frac{4C_0 \tilde{C} \epsilon}{r}.$$ Combining this with
(\ref{fin8}),
\begin{equation} \label{fin9j}
u_j(\hat{z}-s\hat{w})\geq u_j(\hat{z}) -s\frac{4C_0 \tilde{C}
\epsilon}{r\sqrt{1+|
\nabla^+u_j(\hat{x}^*)|^2}}\geq u_j(\hat{z}) -s\frac{4C_0 \tilde{C}
\epsilon}{r}.
\end{equation}
Now suppose
$$\epsilon \in \big(0, \min\{ \frac{r}{4C_0\tilde{C}
\sqrt{1+C^2}}, \frac{r}
{8C_0\tilde{C}} \}\big).$$
Note $\epsilon=\epsilon(n,A)$ and with
this choice of $
\epsilon$, (\ref{aaa2n}), (\ref{s2b2}), and (\ref{a3r}) imply that for
$\hat{z} \in \tilde{N}
_\epsilon$,
\begin{equation} \label{fin10b}
\langle \nu_{A}((\hat{z}, u_j(\hat{z}))), w \rangle \geq \frac{1}{2}
C_0,
\end{equation}
whereas (\ref{fin9j}) implies
\begin{equation} \label{fin11r}
u_j(\hat{z}-s\hat{w}) > u_j(\hat{z})-\frac{s}{\sqrt{1+C^2}}\geq
u_j(\hat{z})-sw^\perp.
\end{equation}
From (\ref{fin11r}), $(\hat{z}-s\hat{w}, u_j(\hat{z})-sw^\perp) \in
\partial(A-sw) \cap A$.
Moreover, let $U_s: \mathbb{R}^{n-1} \rightarrow \mathbb{R}^n$ be
defined via
$U_s(\hat{d})=(\hat{d}, u_j(\hat{d}+s\hat{w})-sw^\perp)$. Then
\begin{equation}
\label{hop2r}
U_s(\tilde{N}_\epsilon-s\hat{w}) \subset \partial(A-sw) \cap A.
\end{equation}
Therefore, if $y \in U_s(\tilde{N}_\epsilon - s\hat{w})$ so that
$y=(\hat{z}-s\hat{w},
u_j(\hat{z})-sw^\perp)$ for $\hat{z} \in \tilde{N}_\epsilon$, then $
\nu_{(A-sw)}(y) =
\nu_A((\hat{z}, u_j(\hat{z})))$; hence, (\ref{fin10b}) implies $$
\langle \nu_{(A-sw)}(y), w
\rangle = \langle \nu_{A}((\hat{z}, u_j(\hat{z})), w \rangle \geq
\frac{1}{2}C_0.$$
The aforementioned fact and (\ref{hop2r}) yields
\begin{align}
g_w(s) &= \int_{\partial(A-sw)\cap A} \displaystyle \langle \nu_{Asw}(
y),w \rangle
d\mathcal{H}^{n-1}(y) \nonumber\\
&\geq \int_{U_s(\tilde{N}_{\epsilon}-s\hat{w})} \displaystyle
\frac{1}{2}C_0
d\mathcal{H}^{n-1}(y)=\frac{1}{2}C_0 \mathcal{H}^{n-1}(U_s(\tilde{N}
_{\epsilon}-
s\hat{w})) \nonumber \\
&=\frac{1}{2}C_0 C_1 \mathcal{H}^{n-1}(\partial A)\nonumber\\
&= \frac{1}{2}C_0 C_1 \mathcal{H}^{n-1}(\partial E_m),\nonumber\\
\end{align}
$C_0 C_1$ depending only on the shape of $E_m$ (not the mass) in the sense of
the Lipschitz constant of the Wulff shape, and
this proves
(\ref{rrr03}).
Next, $f'_w(t)=2g_w(t)$ almost everywhere. To see this, set
\begin{equation*}
h(y):=|A \setminus (A-y)|=\int_{\mathbb{R}^n} \mathbf{1}_{A
\setminus (A-y)}(x) dx,
\end{equation*}
and $\psi \in C_c^{\infty}(\mathbb{R}^n; \mathbb{R}^n).$ Now, $h(y)$ is
Lipschitz;
therefore, using integration by parts, Fubini, and that $\psi$ is
divergence free
\begin{align}
\int_{\mathbb{R}^n} \psi \cdot \nabla h(y) dy &=-\int_{\mathbb{R}^n}
\operatorname{div} \psi(y) h(y)dy \nonumber \\
&=-\int_{\mathbb{R}^n} \biggl(\int_{\mathbb{R}^n} \operatorname{div}
\psi(y)
\mathbf{1}_{A \setminus (A-y)}(x) dy \biggr) dx \nonumber\\
&=-\int_{A} \biggl(\int_{\mathbb{R}^n} \operatorname{div} \psi(y)
\mathbf{1}_{A
\setminus (A-y)}(x) dy \biggr) dx \nonumber\\
&=-\int_{A} \biggl(\int_{\mathbb{R}^n} \operatorname{div} \psi(y)
(1-\mathbf{1}_{(A-y)}
(x)) dy \biggr) dx \nonumber\\
&=\int_{A} \biggl(\int_{\mathbb{R}^n} \operatorname{div} \psi(y)
\mathbf{1}_{(A-y)}(x)
dy \biggr) dx. \label{n16}
\end{align}
Next, note $\mathbf{1}_{(A-y)}(x)=\mathbf{1}_{(A-x)}(y)$ and by
applying the
divergence theorem, it follows that
\begin{align}
\int_{\mathbb{R}^n} \operatorname{div} \psi(y) \mathbf{1}_{(A-x)}
(y)dy&=\int_{\mathbb{R}^n} \langle \psi(y), \nu_{(A-x)}(y) \rangle
d\mathcal{H}^{n-1}
\lfloor \partial(A-x)(y) \nonumber\\
&=\int_{\mathbb{R}^n} \langle \psi(z-x), \nu_{A}(z) \rangle
d\mathcal{H}^{n-1} \lfloor
\partial A(z) \label{n22}.
\end{align}
Hence, (\ref{n16}), (\ref{n22}), and Fubini imply
\begin{align*}
\int_{\mathbb{R}^n} \psi \cdot \nabla h(y) dy &=\int_{A}
\biggl(\int_{\mathbb{R}^n}
\langle \psi(z-x), \nu_{A}(z) \rangle d\mathcal{H}^{n-1} \lfloor
\partial A(z)\biggr)dx\\
&=\int_{\mathbb{R}^n} \biggl(\int_{\mathbb{R}^n} \psi(z-x)
\mathbf{1}_A(x) dx \biggr)
\cdot \nu_{A}(z) d\mathcal{H}^{n-1} \lfloor \partial A(z)\\
&=\int_{\mathbb{R}^n} \biggl(\int_{\mathbb{R}^n} \psi(y) \mathbf{1}
_A(z-y) dy \biggr)
\cdot \nu_{A}(z) d\mathcal{H}^{n-1} \lfloor \partial A(z)\\
&=\int_{\mathbb{R}^n} \psi(y) \cdot \biggl(\int_{\mathbb{R}^n}
\mathbf{1}_A(z-y)
\nu_{A}(z) d\mathcal{H}^{n-1} \lfloor \partial A(z)\biggr)dy\\
&=\int_{\mathbb{R}^n} \psi(y) \cdot \biggl(\int_{\partial (A-y)}
\mathbf{1}_A(x) \nu_{A}
(x+y) d\mathcal{H}^{n-1}(x) \biggr)dy.
\end{align*}
But, $\psi \in C_c^{\infty}(\mathbb{R}^n;\mathbb{R}^n)$ is
arbitrary, this hence
implies that for a.e. $y \in \mathbb{R}^n$, $$\nabla
h(y)=\int_{\partial (A-y) \cap A}
\nu_{(A-y)}(x) d\mathcal{H}^{n-1}(x).$$
Note $f_w(t)=2h(tw)$, and since $f_w(t)$ is Lipschitz, for a.e. $t
\in \mathbb{R}$,
$$f'_w(t)=2 \nabla h(tw) \cdot w= 2 g_w(t).$$
Now one may finish the proof: let $y \in \mathbb{R}^n$, and write
$y=tw$ for some
$w \in \mathbb{S}^n$. If $t \in [0,s]$,
\begin{align*}
f_w(t)&=\int_0^t f'_w(\xi)d\xi= \int_0^t 2g_w(\xi)d\xi \geq
\displaystyle 2\inf_{w\in
\mathbb{R}^n}g_w(s) t\\
&= \displaystyle 2\inf_{w\in \mathbb{R}^n} g_w(s) |y|.\\
\end{align*}
Let $C(n,A):= \displaystyle 2 \inf_{w\in \mathbb{R}^n} g_w(s)$, and
note that
$$C(n,A)\ge C_0 C_1 \mathcal{H}^{n-1}(\partial E_m)=:a_{m_*}>0$$
thanks to
(\ref{rrr03}). Thus, for $|y| \leq s=s(n,A)$,
$$a_{m_*}|y| \leq |(A+y) \Delta A|.$$
In particular, (ii) is true.
Assuming $n \ge 3$, if $g \in C^1$ and $f$ is elliptic, $m$ is
small, then $E_m$ is
convex \cite{MR2807136}. Hence (iii) is true.
For the proof of (iv), assume
$$y_0 \in (\int_{E_m} \nabla g(x)dx)^{\perp},$$
\begin{align*}
\Big|\int_{E_m+y_0} g(x)dx-\int_{E_m} g(x)dx\Big|\\
&= \int_{E_m} g(x+y_0)-g(x)dx\\
&=y_0\cdot \int_{E_m} \nabla g(x)dx+\frac{1}{2} \int_{E_m}
\Big( \langle D^2 g(x)y_0,
y_0\rangle +2o_x(|y_0|^2)\Big)dx\\
&=\frac{1}{2} \int_{E_m}( \langle D^2 g(x)y_0, y_0\rangle +2o_x(|
y_0|^2)) dx;\\
\end{align*}
now there is an $\alpha(x)>0$ s.t. for $|y_0|<\alpha(x)$,
$$\frac{|o_x(|y_0|^2)|}{|y|^2} \le 1;$$
also, $x \rightarrow \alpha(x)$ is continuous. In particular, via
the compactness of $
\overline{E_m}$,
$$
\alpha:=\inf_{E_m} \alpha(x)>0;
$$
therefore, if $|y_0|<\alpha$,
$$\sup_{x \in E_m} \frac{|o_x(|y_0|^2)|}{|y_0|^2} \le 1;$$
hence if $|y_0|<\alpha$,
$$
\langle D^2 g(x)\frac{y_0}{|y_0|}, \frac{y_0}{|y_0|}\rangle +|o_x(|
y_0|^2)|/|y_0|^2 \le |
D^2 g(x)| +1
$$
therefore the dominated convergence theorem implies
$$
\limsup_{|y_0| \rightarrow 0} \int_{E_m} (\langle D^2 g(x)\frac{y_0}
{|y_0|}, \frac{y_0}{|
y_0|}\rangle +o_x(|y_0|^2)/|y_0|^2) dx \le \int_{E_m}|D^2g(x)|dx.
$$
In particular, there exists $\alpha>0$, such that with $|y_0|
<\alpha$
\begin{align*}
\Big|\int_{E_m+y_0} g(x)dx-\int_{E_m} g(x)dx\Big|\\
&\le |y_0|^2 \int_{E_m}(|D^2 g(x)| +\frac{1}{6}) dx;\\
&=\mu_{W^{2,1}}|y_0|^2,
\end{align*}
$$
\mu_{W^{2,1}}=\int_{E_m}(|D^2 g(x)| +\frac{1}{6}) dx.
$$
Observe there exists $t_{E_m}>0$ such that
$$
a_{m_*}|y_0| \le |(E_m+y_0) \Delta E_m|
$$
when $|y_0| \le t_{E_m}$, therefore there then exists $\alpha_{m}
=t_{E_m} a_{m_*}/
m>0$ so that if $\epsilon < \alpha_{m}$,
$|y_0|$ may be taken so that
$$
\frac{a_{m_*}}{m}|y_0|=\epsilon;
$$
hence assuming $\epsilon < \alpha_{m}$, (iv) is true via:
\begin{align*}
w_m(\epsilon)& \le \Big|\int_{E_m+y_0} g(x)dx-\int_{E_m} g(x)dx\Big|
\\
& \le \mu |y_0|^2=\Big(||D^2 g||_{L^1(E_m)} +m\frac{1}{6} \Big)
(\frac{\epsilon m}
{a_{m_*}})^2.
\end{align*}
Now assume $g \in W_{loc}^{2,\infty}$, $|y_0|\le q$, \& suppose
$w_{x,y_0}$ is s.t.
(thanks to the mean-value theorem)
\begin{align*}
\frac{1}{2} \int_{E_m}( \langle D^2 g(x)y_0, y_0\rangle +2o_x(|y_0|
^2)) dx&=\frac{1}
{2}|y_0|^2\int_{E_m} \langle D^2 g(w_{x,y_0})\frac{y_0}{|y_0|},
\frac{y_0}{|y_0|}
\rangle dx\\
&\le \frac{1}{2}||D^2g||_{L^{\infty}(a_qE_m)}m |y_0|^2.
\end{align*}
In particular,
$$
w_m(\epsilon) \le \Big|\int_{E_m+y_0} g(x)dx-\int_{E_m} g(x)dx\Big|
\le \mu |y_0|
^2\le\frac{1}{2}||D^2g||_{L^{\infty}(a_qE_m)}m (\frac{\epsilon m}
{a_{m_*}})^2.
$$
Observe that if $m$ is small with $f$ elliptic, $E_m$ is convex and
arbitrarily close
(in terms of the mass) to a re-scaling of the Wulff shape.
Therefore (vi) is proved thanks to:
$$m (\frac{\epsilon m}{a_{m_*}})^2= m (\frac{\epsilon m}{\bar{C}
m^{\frac{n-1}
{n}}})^2=\frac{\epsilon^2}{\bar{C}} m^{1+\frac{2}{n}}.
$$
Assuming $n=2$, the convexity holds without an ellipticity
assumption on $f$ in
particular showing (v).
\end{proof}
\begin{rem} \label{zo}
The modulus in Theorem \ref{@'} is calculated in an explicit way. In
particular,
supposing $m$ is small, $f(w)=1$ if $w \in \mathbb{S}^{n-1}$, $g(|
x|)=|x|^2$,
$w_m(\epsilon)=a_1\epsilon^2 m^3$. Thus the $\epsilon$ dependence is
optimal.
Observe also that the $m$ dependence in (v) supposing $g$ is the
quadratic is
almost sharp (note also that the coefficient is a function of the
mass where one sees a nontrivial dependence, therefore this term is
likely to generate the sharp function of $m$). Assuming $g(x)=|x|$,
$f(w)=1$ if $w \in \mathbb{S}^{n-1}$,
$w_m(\epsilon)=\overline{a_1}\epsilon^2 m^{3-\frac{1}{n}}$.\\
\end{rem}
\begin{rem}
Supposing $n=2$ and that $g$ is strictly convex, then minimizers in
the collection of
convex sets are up to sets of measure zero unique (one proof is with
displacement
interpolation \cite{MR1641031}). Also, assuming
when $E$ is bounded, convex, $0 \notin E$, $|E \cap \{g > 0\}|>0$,
$$
\int_E \nabla g(x) dx \neq 0,
$$
any $E_m$ is convex and up to sets of measure zero unique
\cite{Cryst}.
\end{rem}
\subsection{Curvature estimates}
In this section we discuss some curvature estimates for smooth
minimizers in $
\mathbb R^3.$ The technique is new and we obtain convexity as an
application of the quantitative stability for general perturbations
of the radial potential in the large mass regime.
Let $K$ be the Gauss curvature, then one has
\begin{equation}
2K=2\kappa_1\kappa_2=(\kappa_1+\kappa_2)^2-\kappa_1^2-
\kappa_2^2=H^2-|A|
^2,
\end{equation}
where $H$ is the mean curvature and $A$ is the second fundamental
form, $\kappa_1,
\kappa_2$ are the principal curvatures.
Assuming $f$ a surface tension, the anisotropic mean curvature is 
$$
H_f=\text{trace} \Big(D^2 f A \Big).
$$
The formula for the first variation implies
$$
H_f=\mu-g,
$$
where 
$$
\mu= \frac{(n-1)\mathcal{F}(E_m) + \int_{\partial^* E_m} g \langle x, \nu_{E_m} \rangle d \mathcal{H}^{n-1}}{n|E_m|}.
$$
\begin{thm}
Suppose that $\Sigma\subset\mathbb R^3$ is a smooth minimizer such
that $D^2g$ is bounded
in $\mathbb R^3$, $f(w)=1$ if $w \in \mathbb{S}^{n-1}$, and
\begin{itemize}
\item[(i)] $\mu-g>0$ and small,
\item[(ii)] $\frac{|A|^2}{(\mu-g)^2}\sim 1+\epsilon$, with $\epsilon>0$,
\end{itemize}
then there is a nondecreasing function $\sigma(t)\gg t,$ for small
and positive $t$, such that
\[
0<\sigma(\mu-g)\le |\nabla_\Sigma g|^2
\]
implies that $\Sigma$ is convex.
\end{thm}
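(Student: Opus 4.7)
My plan is to argue by contradiction using the strong maximum principle on the closed surface $\Sigma$, with Simons' identity providing the key pointwise equation. First I translate the hypotheses into PDE form: since $f \equiv 1$ on $\mathbb{S}^{n-1}$ the anisotropic mean curvature reduces to the standard $H$, and the first-variation identity recorded in the text becomes $H = \mu - g$ on $\Sigma$, so that $\nabla_\Sigma H = -\nabla_\Sigma g$ and $\Delta_\Sigma H = -\Delta_\Sigma g$. The Gauss equation $2K = H^2 - |A|^2$ then reduces convexity to the pointwise inequality $|A|^2 \le H^2$.

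Next I specialize Simons' identity to a surface in $\mathbb{R}^3$. Using $\mathrm{tr}(A^3) = H(H^2 - 3K)$ and $|A|^4 = (H^2 - 2K)^2$, Simons reads
$$
\tfrac{1}{2}\Delta_\Sigma |A|^2 = |\nabla A|^2 + \langle A, \nabla_\Sigma^2 H\rangle + K(H^2 - 4K),
$$
and combined with $\tfrac{1}{2}\Delta_\Sigma H^2 = |\nabla_\Sigma g|^2 - H\Delta_\Sigma g$ this yields an expression for $\Delta_\Sigma \chi$ where $\chi := H^2 - |A|^2$. Assuming for contradiction that $\Sigma$ is not convex, $\chi$ attains a negative value; let $p \in \Sigma$ realize $\min_\Sigma \chi$. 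The minimum principle gives $\Delta_\Sigma \chi(p) \ge 0$, producing a pointwise inequality relating $|\nabla_\Sigma g|^2$, $|\nabla A|^2$, curvatures, and derivatives of $g$.

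At $p$ I estimate each term using the hypotheses. By (ii), $|A|^2 = (1+\epsilon)(\mu-g)^2$ and $K = -\tfrac{\epsilon}{2}(\mu-g)^2$, so the Simons quartic $K(H^2 - 4K) = -\tfrac{\epsilon}{2}(1+2\epsilon)(\mu-g)^4$ is of order $(\mu-g)^4$ and negligible by (i). The bound on $D^2 g$ yields $|H\Delta_\Sigma g| + |\langle A, \nabla_\Sigma^2 g\rangle| \lesssim \|D^2 g\|_\infty (\mu - g)$. Absorbing $|\nabla A|^2$ via the refined Kato inequality $|\nabla A|^2 \ge \tfrac{3}{4}|\nabla_\Sigma g|^2$ for two-dimensional hypersurfaces, together with the critical-point relation at $p$, I expect to obtain a bound of the form $|\nabla_\Sigma g|^2(p) \lesssim C(\epsilon, \|D^2 g\|_\infty)(\mu - g)(p)$. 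Choosing $\sigma$ with $\sigma(t) \gg t$ to strictly dominate this linear rate for small $t$ (say $\sigma(t) = t^{1/2}$ up to a constant depending on $\epsilon$ and $\|D^2 g\|_\infty$), the hypothesis $\sigma(\mu - g) \le |\nabla_\Sigma g|^2$ is then violated at $p$, yielding the required contradiction.

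The hardest step will be orienting the auxiliary function so that Simons' quartic term $K(H^2 - 4K)$, which has the ``wrong'' sign when $K < 0$, does not spoil the absorption: a naive application of the minimum principle to $\chi$ produces only a lower bound on $|\nabla_\Sigma g|^2$, which is consistent with the hypothesis rather than contradictory. I anticipate needing a weighted variant such as $\chi H^{-\beta}$ or $\chi + \lambda(\mu - g)^2$, with a carefully tuned exponent so that the chosen extremum captures non-convexity in a direction compatible with the orientation of Kato's inequality and of Simons; the explicit form of $\sigma$ then emerges from the universal constants appearing in Simons, Kato, and the $\|D^2 g\|_\infty$ estimate.
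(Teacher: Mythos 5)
Your proposal follows essentially the same route as the paper: argue by contradiction at a negative minimum of a weighted non-convexity function of the form $H^{\alpha}(H^{2}-|A|^{2})$ with $\alpha<0$ (the paper takes $\alpha=-1$), expand $\Delta_{\Sigma}$ of it via Simons' identity, use the critical-point relation and a Kato-type lower bound on $|\nabla_\Sigma A|^{2}$, and let the hypothesis $\sigma(\mu-g)\le|\nabla_\Sigma g|^{2}$ with $\sigma(t)\gg t$ overwhelm the $O(\|D^{2}g\|_\infty)(\mu-g)$ terms. The only piece you leave unverified is the crux computation that the weight exponent can be tuned so that the net coefficient of $-|\nabla_\Sigma H|^{2}$ is strictly positive; the paper carries this out and finds it equals $c_{0}=\epsilon^{2}/(2(1+\epsilon))$ for $\alpha=-1$, which is exactly where hypothesis (ii) with $\epsilon>0$ enters.
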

\begin{rem}
The condition $(ii)$ above implies
\[
\kappa_1^2+\kappa_2^2\sim(1+\epsilon)(\kappa_1^2+\kappa_2^2+2K),
\]
in other words
\[
\frac{-2K}{\kappa_1^2+\kappa_2^2}\sim \frac\epsilon{1+\epsilon}.
\]
Note that we always have the inequality $\frac{|2K|}{\kappa_1^2+
\kappa_2^2}\le 1$.
\end{rem}
\begin{proof}
Introduce the function
\[
v=H^\alpha(H^2-|A|^2),
\]
where $\alpha$ is a negative number to be chosen below.
Suppose $\min_{x\in \Sigma} v(x)<0$, then there is a point
$x_0\in\Sigma$ such that
this minimum is attained.
Let us choose a local coordinate system at $x_0$ such that $A$ is
diagonal, i.e.
$A=\textrm{diag}[\kappa_1,\kappa_2]$, where $\kappa_1, \kappa_2$ are
the
principal curvatures of $\Sigma$ at $x_0$.
Writing the Laplace-Beltrami operator in local coordinates $
\Delta_{\Sigma} v=g^{ij} (\partial_i \partial_j v-\Gamma_{ij}^k
\partial_k v)$, $\Gamma_{ij}^k$ being the Christoffel symbols, we
see that at $x_0$, where the gradient of $v$ vanishes and the metric
tensor $g_{ij}$ is identity, $\Delta_\Sigma v\ge 0$,
because by assumption $x_0$ is a local minimum
point for $v$. Therefore, using the Simons identity we get
\begin{eqnarray*}
\Delta_\Sigma v
&=&
\Delta_\Sigma (H^{2+\alpha}-H^{\alpha}|A|^2)\\
&=&
(2+\alpha)(1+\alpha)H^\alpha|\nabla_\Sigma H|^2+(2+
\alpha)H^{\alpha+1}\Delta_\Sigma H\\
&&-H^{\alpha}\left[2 |\nabla_\Sigma A|^2-2|A|^4+2\langle A, D^2
H\rangle+2H\textrm{Trace}
(A^3)\right]\\
&&
-2(\alpha H^{\alpha-1}\nabla_\Sigma H\cdot \langle 2A, \nabla_\Sigma
A\rangle)\\
&&
-|A|^2(\alpha(\alpha-1)H^{\alpha-2}|\nabla_\Sigma H|^2+\alpha
H^{\alpha-1}\Delta H)
.
\end{eqnarray*}
Next, notice that $\nabla_\Sigma v=0$ at $x_0$, hence
\[
(\alpha+2)H^{\alpha+1}\nabla_\Sigma H-\alpha
H^{\alpha-1}\nabla_\Sigma H|A|^2
=
2H^\alpha \langle A, \nabla_\Sigma A\rangle,
\]
or equivalently
\[
2H^\alpha\langle A, \nabla_\Sigma A\rangle
=
\nabla_\Sigma H\left(
2H^{\alpha+1}+\alpha H^{\alpha-1}(H^2-|A|^2)\right)
=
\nabla_\Sigma H\left(
2H^{\alpha+1}+\alpha \frac v{H}\right)
\]
which yields
\begin{equation}
2\alpha H^{\alpha-1}\nabla_\Sigma H\cdot \langle A, \nabla_\Sigma
A\rangle=\frac\alpha H|\nabla_\Sigma H|^2\left(
2H^{\alpha+1}+\alpha \frac v{H}\right).
\end{equation}
Furthermore,
\begin{eqnarray*}
|\nabla_\Sigma A|^2
&\ge&
\frac{|\langle A, \nabla_\Sigma A\rangle|^2}{|A|^2}\\
&=&
\frac{1}{4H^{2\alpha}|A|^2} |\nabla_\Sigma H|^2\left(
2H^{\alpha+1}+\alpha \frac v{H}\right)^2.
\end{eqnarray*}
Finally, observe that
\[
\textrm{Trace}(A^3)=\kappa_1^3+\kappa_2^3=(\kappa_1+\kappa_2)
(\kappa_1^2-
\kappa_1\kappa_2+\kappa_2^2)=
H(|A|^2-K).
\]
After rearranging the terms, we get that $v$ satisfies the following
equation
\begin{eqnarray*}
0
&\le&
\Delta_\Sigma (H^{2+\alpha}-H^{\alpha}|A|^2)\\
&\le &
(2+\alpha)(1+\alpha)H^\alpha|\nabla_\Sigma H|^2+(2+
\alpha)H^{\alpha+1}\Delta_\Sigma H\\
&&-H^{\alpha}\left[\frac{1}{2H^{2\alpha}|A|^2} |\nabla_\Sigma H|
^2\left(
2H^{\alpha+1}+\alpha \frac v{H}\right)^2-2|A|^4
-2|A|\|D^2 H\|
+2 H^2(|A|^2-K)\right]\\
&&
-\frac{2\alpha} H|\nabla_\Sigma H|^2\left(
2H^{\alpha+1}+\alpha \frac v{H}\right)\\
&&
-|A|^2\left(\alpha(\alpha-1)H^{\alpha-2}|\nabla_\Sigma H|^2+\alpha
H^{\alpha-1}\Delta H\right)
.
\end{eqnarray*}
Since $K<0$ at $x_0$ we can cancel it from the equation and then the
resulted inequality can be written in the following form
\[
0\le J_1+J_2 +J_3,
\]
where
\begin{eqnarray*}
J_1&=& (2+\alpha)H^{\alpha+1}\Delta_\Sigma H+2H^\alpha|A|\|D^2 H\|-
\alpha |A|^2H^{\alpha-1}\Delta H,\\
J_2&=&
(2+\alpha)(1+\alpha)H^\alpha|\nabla_\Sigma H|^2
-\frac{H^\alpha}{2H^{2\alpha}|A|^2} |\nabla_\Sigma H|^2\alpha^2
\frac {v^2}{H^2}
\\
&&
- \frac{2\alpha^2} H|\nabla_\Sigma H|^2\frac v{H}
-\alpha(\alpha-1)|A|^2H^{\alpha-2}|\nabla_\Sigma H|^2,
\\
J_3&=&
-\frac{4\alpha} H|\nabla_\Sigma H|^2
H^{\alpha+1}
-\frac{H^\alpha}{2H^{2\alpha}|A|^2} |\nabla_\Sigma H|^2
\left(
4H^{2\alpha+2}+4H^{\alpha+1}\alpha \frac v{H}\right)\\
&&
+2H^\alpha(|A|^4-H^2|A|^2).
\end{eqnarray*}
Putting $\omega=\frac{|A|^2}{H^2}$ for convenience, with $\omega=
1+\epsilon$ via selecting another $\epsilon$, we see that
\begin{eqnarray*}
J_1=H^{\alpha+1}
\left[
(2+\alpha)+\sqrt \omega-\alpha\omega
\right]O(\|D^2 H\|).
\end{eqnarray*}
Similarly,
\begin{eqnarray*}
J_2
&=&
H^{\alpha}|\nabla_\Sigma H|^2
\left[
\alpha^2+3\alpha+2
-
\alpha^2\frac1{2\omega}(1-\omega)^2
+
2\alpha^2|1-\omega|
-
\alpha(\alpha-1)\omega
\right]\\
&=&
H^{\alpha}|\nabla_\Sigma H|^2
\left[\alpha^2+3\alpha+2-\alpha^2 \frac{\epsilon^2}{2(1+\epsilon)} +2\alpha^2 \epsilon
-
(\alpha^2-\alpha)(1+\epsilon)
\right]
\\
\end{eqnarray*}
where we used $v=H^{\alpha+2}(1-\omega)$ and that $\alpha$ is
negative.
Finally,
\begin{eqnarray*}
J_3&=&
-\frac{4\alpha} H|\nabla_\Sigma H|^2
H^{\alpha+1}
-\frac{H^\alpha}{2H^{2\alpha}|A|^2} |\nabla_\Sigma H|^2
\left(
4H^{2\alpha+2}+4H^{\alpha+1}\alpha \frac v{H}\right)\\
&&
+2H^\alpha(|A|^4-H^2|A|^2)\\
&=&
-4\alpha H^\alpha|\nabla_\Sigma H|^2
-2H^\alpha\frac{|\nabla_\Sigma H|^2}\omega
+
2\alpha H^\alpha\frac{|\nabla_\Sigma H|^2}\omega |1-\omega|\\
&&
+2H^{\alpha+4}(\omega^2-\omega)\\
&=&
H^\alpha|\nabla_\Sigma H|^2
\left[
-4\alpha
-\frac{2}\omega
+
2\alpha \frac{|1-\omega|}\omega
\right]+2H^{\alpha+4}(\omega^2-\omega)\\
&=&
H^\alpha|\nabla_\Sigma H|^2
\left[
-4\alpha
-\frac{2}{1+\epsilon}
+
2\alpha \frac{\epsilon}{1+\epsilon}
\right]\\
&&
+2H^{\alpha+4}(\omega^2-\omega).
\end{eqnarray*}
By assumption $\omega$ is bounded, hence for the last term we have
$2H^{\alpha+4}(\omega^2-\omega)=O(H^{\alpha+4})$, and consequently
\[
J_2+J_3=-H^\alpha|\nabla_\Sigma H|^2 q(\epsilon, \alpha)+O(H^{\alpha+4}),
\]
where
\begin{eqnarray*}
q(\epsilon, \alpha)
&=&
-\left[\alpha^2+3\alpha+2-\alpha^2 \frac{\epsilon^2}{2(1+\epsilon)} +2\alpha^2
\epsilon
-
(\alpha^2-\alpha)(1+\epsilon)
\right]\\
&&
-\left[
-4\alpha
-\frac{2}{1+\epsilon}
+
2\alpha \frac{\epsilon}{1+\epsilon}
\right]\\
&=&
-\left[2-\alpha^2 \frac{\epsilon^2}{2(1+\epsilon)} +\alpha^2 \epsilon
+\alpha\epsilon
\right]\\
&&
+\frac{2-2\alpha\epsilon}{1+\epsilon}
\\
&=&
\frac1{2(1+\epsilon)}
\left[
-4-4\epsilon
+\alpha^2 \epsilon^2
-2\alpha^2 \epsilon-2\alpha^2 \epsilon^2
-2\alpha\epsilon-2\alpha\epsilon^2
+4-4\alpha\epsilon
\right]
\\
&=&
\frac\epsilon{2(1+\epsilon)}
\left[
-4
-\alpha^2 \epsilon
-2\alpha^2
-6\alpha-2\alpha\epsilon
\right].
\\
\end{eqnarray*}
Combining, and noting that
mean curvature is
$$
H=\mu-g,
$$
and in particular,
$$
|\nabla_\Sigma H|= |\nabla_\Sigma g|,
$$
we see that
\begin{eqnarray*}
0\le J_1+J_2+J_3
&\le&
H^{\alpha+1}
\left[
(2+\alpha)+\sqrt \omega-\alpha\omega
\right]O(\|D^2 H\|)\\
&&
-H^\alpha|\nabla_\Sigma H|^2 q(\epsilon, \alpha)+O(H^{\alpha+4})\\
&\le&
H^{\alpha+1}\left(O(1)-\frac{\sigma(H)} Hc_0+O(H^3)\right)<0
\end{eqnarray*}
provided $\lim_{t\to 0^+}\frac{\sigma(t)}t=\infty$ and $c_0>0$.
Observe that
choosing, say, $\alpha=-1$,
one is able to choose $c_0$ with
\begin{eqnarray*}
q(\epsilon, -1)&=&
\frac\epsilon{2(1+\epsilon)}
\left[
-4
-\epsilon
-2
+6+2\epsilon
\right]\\
&=&
\frac{\epsilon^2}{2(1+\epsilon)}=c_0.
\\
\end{eqnarray*}
\end{proof}
\begin{rem}
Recalling that the mean curvature is
$$
H=\mu-g
$$
and
supposing
$g(x)=g(|x|)$ is non-decreasing, Theorem \ref{@'} yields:
$$
a=\Big[ \frac{3m}{4\pi} \Big]^{\frac{1}{3}},
$$
$$
\partial E_m=\Sigma,
$$
$$
|E_m|=m;
$$
in particular, note that thanks to
$$
|A|^2 =\frac{2}{a^2}
$$
$$
H=\frac{2}{a},
$$
when $g_t$ is a perturbation of $g$ so that $|\nabla_{\Sigma_t} g_t|>0$, $\Sigma_t$
is a variation of the sphere
if $m > m_1$ where $m_1>0$ is a computable constant. In particular,
a continuity argument then generates convexity for perturbations
(which in the general case may not be radial/convex). If $m$ is
large, supposing the potential has convex sub-level sets, convexity
is known; nevertheless the previous implies an explicit bound and
also convexity for possible potentials with general sub-level sets.
\end{rem}
\begin{rem}
For general $f$, we have $H_f: =\langle D^2f, A\rangle=\mu-g$ (supposing a system where $A$ is diagonal). Assuming small
$H_f>0, $ we can extend the previous estimate to non-constant $f$, provided that
$D^2H$ remains bounded (for estimating $J_1$),
$H$ is small and positive.
\end{rem}
{\bf Acknowledgement.}  Some of the research was completed at the University of Edinburgh while the first author was a visiting scholar. The support is kindly acknowledged. 
\bibliographystyle{amsalpha}
\bibliography{References}
\end{document}